\documentclass[preprint,review,10pt]{amsart}
\usepackage{amsmath,amssymb,amsfonts,xspace,mathrsfs}
\newtheorem{theorem}{Theorem}[section]

\theoremstyle{definition}
\newtheorem{definition}[theorem]{Definition}
\newtheorem{example}[theorem]{Example}

\newtheorem{proposition}[theorem]{Proposition}

\theoremstyle{remark}

\numberwithin{equation}{section}

\begin{document}

\title{ On  $n$-ary $S$-hyperideals }

\author{Mahdi Anbarloei}
\address{Department of Mathematics, Faculty of Sciences,
Imam Khomeini International University, Qazvin, Iran.
}

\email{m.anbarloei@sci.ikiu.ac.ir}


\subjclass[2020]{  20N20, 16Y20  }


\keywords{  $n$-ary $S$-hyperideal, minimal $n$-ary $S$-prime hyperideal, $n$-ary $Q$-primary hyperideal.}

\begin{abstract}
 In this paper, we introduce and study the notion of  $n$-ary $S$-hyperideals in a Krasner $(m,n)$-hyperring. 

\end{abstract}
\maketitle

\section{Introduction} 
In commutative algebra, multiplicatively closed subsets are essential tools for the localization of rings and modules. Beyond this, they constitute a core element in    ideal theory, providing an adaptable apparatus for exploring and categorizing various ideal classes. The prominent concept of $S$-ideals has     recently been introduced via a multiplicatively closed subset.   Assume that $I$ is a proper ideal of a commutative ring $R$, and   $S$  is a multiplicatively closed subset of $R$.   $I$ is called  an $S$-ideal if for all $x, y \in R$,   $xy \in I$ and $x \in S$ imply that $y \in I$  \cite{Khashan}.

Parallel to the evolution of classical abstract algebra, the framework of algebraic hyperstructures arose, substantially generalizing standard algebraic ideas. Hyperstructure theory, which traces its origins back to F. Marty in 1934 \cite{s1}, extends conventional algebra by defining multi-valued operations where the composition of two elements  yields a set rather than a single element. Over the decades, this idea has generalized to encompass a diverse variety of structures, including hyperrings, hyperlattices, hyperfields, and hypermodules \cite{s3, davvaz1, Konstantindou, s4, s10}. An important advancement in this domain was the notion of $n$-ary hyperstructures, which involve hyperoperations with multiple inputs, thereby providing a more flexible algebraic framework \cite{s9,l1,l2}. In particular, Mirvakili and Davvaz \cite{d1} introduced Krasner $(m, n)$-hyperrings, which combine the properties of hyperrings with an $m$-ary hyperoperation and an $n$-ary operation. Several key concepts of hyperideals, such as $n$-ary prime, $n$-ary primary, maximal, $(k,n)$-absorbing (primary) hyperideals, and   weakly $(k,n)$-absorbing (primary) hyperideals, were introduced and subsequently analyzed in \cite{sorc1,davvazz,rev2}. The notion of $n$-ary multiplicative subsets of a Krasner $(m,n)$-hyperring  was introduced in \cite{sorc1}, serving as a highly applicable tool in the structure theory of hyperideals. 

Let $S$ be an $n$-ary multiplicative subset of a Krasner $(m,n)$-hyperring $\mathscr{A}$.
In this paper, we introduce and investigate the notion of $n$-ary $S$-hyperideals, which  offer a broad perspective on the classification of hyperideals by means of  $S$. Among the various results established  herein,  we obtain that the radical of every $n$-ary $S$-hyperideal of $\mathscr{A}$ is also an $n$-ary $S$-hyperideal in Theorem \ref{1}(2).
In Theorem \ref{6}, we conclude that any minimal $n$-ary prime hyperideal over an $n$-ary $S$-hyperideal is itself an $n$-ary $S$-hyperideal of $\mathscr{A}$.
In Theorem \ref{4}, we determine the smallest $n$-ary $S$-hyperideal of $\mathscr{A}$ containing an arbitrary hyperideal.
Proposition \ref{3} shows that for every hyperideal $P$ of $\mathscr{A}$, there exists  an $n$-ary multiplicative subset $S$ of $\mathscr{A}$ such that $P$ becomes an $n$-ary $S$-hyperideal. 
Proposition \ref{intersection} verifies   that the class of $n$-ary $S$-hyperideals is closed under arbitrary intersections.
Although the notions of $S$-hyperideals and primary hyperideals are distinct, Theorem \ref{ejavad} establishes the equivalence between $n$-ary $S$-hyperideals and $n$-ary $Q$-primary hyperideals when $S$ is chosen as the complement of the minimal prime hyperideal $Q$.
We obtain that if $S$ is the set of all elements outside the finite union of the minimal prime hyperideals, then every $S$-hyperideal has a primary decomposition in Theorem \ref{complement}.
An $S$-hyperideal version of the Prime Avoidance Lemma is studied in Theorem \ref{avoidance}. Finally, we explore how $n$-ary $S$-hyperideals behave under various hyperring-theoretic constructions.
\section{Preliminaries}
This section is devoted to recalling some fundamental definitions concerning $n$-ary hyperstructures, which will be utilized throughout this work.

Given a non-empty set $\mathscr{A}$, we denote by $\mathcal{P}^*(\mathscr{A})$ the set of its non-empty subsets. An $n$-ary hyperoperation on $\mathscr{A}$ is defined as a mapping $f : \mathscr{A}^n \to \mathcal{P}^*(\mathscr{A})$, in which case the algebraic system $(\mathscr{A}, f)$ constitutes an $n$-ary hypergroupoid.
To simplify the notation, the sequence $p_i, p_{i+1}, \dots, p_j$ is abbreviated as $p^j_i$, with the understanding that this symbol is empty whenever $j < i$. Consequently, the expression $f(p_1, \dots, p_i, q_{i+1}, \dots, q_j, r_{j+1}, \dots, r_n)$ is rendered as $f(p^i_1, q^j_{i+1}, r^n_{j+1})$. In the special case where $q_{i+1} = \cdots = q_j = q$, this further reduces to $f(p^i_1, q^{(j-i)}, r^n_{j+1})$. Moreover, we  define $f(P^n_1)= \bigcup \{f(p^n_1) \ \vert \ p_i \in P_i, 1 \leq i \leq n \}$ where $P_1,\cdots, P_n$ are non-empty subsets of $\mathscr{A}$.
\begin{definition}
\cite{d1} An algebraic hyperstructure  $(\mathscr{A}, f, g)$, or simply $\mathscr{A}$, is called a commutative Krasner $(m, n)$-hyperring with a scalar identity $1_\mathscr{A}$ if  it meets the following conditions:

\begin{itemize} 
\item[\rm{{\bf i.}}]~ $(\mathscr{A}, f$) is a canonical $m$-ary hypergroup, 

\item[\rm{{\bf ii.}}]~ $(\mathscr{A}, g)$ is a $n$-ary semigroup, 

\item[\rm{{\bf iii.}}]~ $g(p^{i-1}_1, f(q^m _1 ), p^n _{i+1}) = f(g(p^{i-1}_1, q_1,p^n_{ i+1}),\ldots, g(p^{i-1}_1, q_m, p^n_{ i+1}))$ for all   $1 \leq i \leq n$ and $p^{i-1}_1 , p^n_{ i+1}, q^m_ 1 \in \mathscr{A}$. 

\item[\rm{{\bf iv.}}]~ $g(0, p^n _2) = g(p_2, 0, p^n _3) = \cdots = g(p^n_ 2, 0) = 0$ for all $p^n_ 2 \in \mathscr{A}$, 

\item[\rm{{\bf v.}}]~ $g(p_1^n) = g(p_{\sigma(1)}^{\sigma(n)})$ for all $p_1^n \in \mathscr{A}^n$ and any permutation $\sigma \in S_n$, 

\item[\rm{{\bf vi.}}]~ $g(p,1_\mathscr{A}^{(n-2)})=p$ for all $p \in \mathscr{A}$.
\end{itemize}
\end{definition}
Throughout this paper, $\mathscr{A}$ denotes a commutative Krasner $(m,n)$-hyperring with a scalar identity $1_\mathscr{A}$.
\begin{definition}
Let $P$ be a non-empty subset of $(\mathscr{A},f,g)$.  
\begin{itemize} 
\item[\rm{{\bf 1.}}]~ $P$ is called a subhyperring of $\mathscr{A}$ if $(P, f, g)$ is a Krasner $(m, n)$-hyperring. 
\item[\rm{{\bf 2.}}]~ $P$ is called a hyperideal of $\mathscr{A}$ if $(P, f)$ is an $m$-ary subhypergroup
of $(\mathscr{A}, f)$ and $g(p^{i-1}_1, P, p_{i+1}^n) \subseteq P$, for every $p^n _1 \in \mathscr{A}$ and $1 \leq i \leq n$.
\end{itemize}
\end{definition}

\begin{definition} \cite{sorc1} The hyperideal generated by an element $p$ of $\mathscr{A}$, denoted by $\langle p \rangle$,  is defined by $\langle p \rangle=\{g(r,p,1_\mathscr{A}^{(n-2)}) \ \vert \ r \in \mathscr{A}\}.$
\end{definition}
\begin{definition} \cite{sorc1}
A hyperideal $M$ of $\mathscr{A}$
is  maximal if for each hyperideal $N$ of $\mathscr{A}$, $M \subseteq N \subseteq \mathscr{A}$ implies that $N=M$ or $N=\mathscr{A}$. The intersection of all maximal hyperideals of $\mathscr{A}$ is denoted by $J_{(m,n)}(\mathscr{A})$.
\end{definition}

\begin{definition}
\cite{sorc1} A proper hyperideal  $P$ of $\mathscr{A}$ is called an $n$-ary  prime hyperideal if for hyperideals $P_1^n$ of $\mathscr{A}$, $g(P_1^ n) \subseteq P$ implies that $P_i \subseteq P$ for some $1 \leq i \leq n$.
\end{definition}
 Lemma 4.5 in \cite{sorc1} indicates that  a proper hyperideal $P$ of $\mathscr{A}$ is $n$-ary  prime if  $g(p^n_ 1) \in P$ and  $p^n_ 1 \in A$ imply that $p_i \in P$ for some $1 \leq i \leq n$. 

\begin{definition} \cite{sorc1} 
Let $P$ be a hyperideal of $\mathscr{A}$. The intersection of  all $n$-ary prime hyperideals $\mathscr{A}$  which contain   $P$ is called radical of $P$ and it is  denoted by ${\bf r} (P)$. If the set of all $n$-ary prime hyperideals   containing $P$ is empty, then ${\bf r}(P)= \mathscr{A}$.
\end{definition}
In \cite{sorc1}, it was shown  that if $p \in {\bf r}(P)$, then 
there exists $w \in \mathbb {N}$ with $g(p^ {(w)} , 1_\mathscr{A}^{(n-w)} ) \in P$ for $w \leq n$, or $g_{(l)} (p^ {(w)} ) \in P$ for $w = l(n-1) + 1$.

\begin{definition} \cite{sorc1}
Assume that $S \subseteq \mathscr{A}$ is  non-empty. We say that $S$ is  an  $n$-ary multiplicative subset (MS) of $\mathscr{A}$  if $g(s_1^n) \in S$ for every $s_1^n \in S$.
\end{definition}

\section{$S$-hyperideals}
In this section , we establish the foundations of $n$-ary $S$-hyperideals. We strat with the following definition.
\begin{definition}
Let $S$ be an MS of $\mathscr{A}$. A proper hyperideal $P$ of $\mathscr{A}$ refers to an $n$-ary $S$-hyperideal if  $g(p_1^n) \in P$ for   $p_1^n \in  \mathscr{A}$ and $p_i \in S$ for some $1 \leq i \leq n$ imply that $g(p_1^{i-1},1_\mathscr{A},p_{i+1}^n) \in P$. 
\end{definition}

\begin{example} \label{ex} 
Let  $\mathscr{A}=\{0,1,2\}$. Consider  the  3-ary hyperoperation $f$ and 3-ary operation $g$   defined on $\mathscr{A}$ as

$f(0,0,0)=0, \ \ \ f(0,0,2)=2, \ \ \ f(0,1,1)=1, \ \ \ f(1,1,1)=1, \ \ f(2,2,2)=2,$

$ f(0,0,1)=1,\ \ \ f(0,2,2)=2,\ \ \ f(1,1,2)=f(1,2,2)=f(0,1,2)=A,$\\
and

$ g(1,1,1)=1,$

$g(1,1,2)=g(1,2,2)=g(2,2,2)=2,$

 $g(0,p_1^2)=0$ for $p_1^2 \in \mathscr{A}$.\\
By Example 4.10 in  \cite{sorc1}, $(\mathscr{A},\oplus,g)$ is a Krasner $(3,3)$-hyperring. In this hyperring, the set $S=\{2\}$ is an MS and $P=\{0,2\}$ is not a 3-ary $S$-hyperideal of $A$ since $g(1^{(2)},2) \in P$  and $2 \in S$ but $g(1,1,1)=1\notin P$.
\end{example}
\begin{theorem} \label{1}
Let $S$ be an MS of  $\mathscr{A}$ and let $P$ be an $n$-ary $S$-hyperideal of  $\mathscr{A}$.  Then, 
\begin{itemize}
\item[\rm{{\bf (1) }}]~ $P \cap S=\varnothing$.
\item[\rm{{\bf (2) }}]~${\bf r}(P)$ is an $n$-ary $S$-hyperideal of  $\mathscr{A}$. 
\item[\rm{{\bf (3) }}]~ $P_Q=\{a \in \mathscr{A} \ \vert \ g(a,Q,1_\mathscr{A}^{(n-2)}) \subseteq P\}$ is an $n$-ary $S$-hyperideal of  $\mathscr{A}$ where $Q \subseteq \mathscr{A} \backslash P$. 
\end{itemize}
\end{theorem}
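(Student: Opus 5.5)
For (1) I will argue by contradiction. Suppose $s \in P \cap S$. Since $g(s,1_\mathscr{A}^{(n-1)})=s \in P$ by the scalar identity axiom, and $s \in S$ sits in the first slot, the definition of $n$-ary $S$-hyperideal gives $g(1_\mathscr{A},1_\mathscr{A}^{(n-1)})=1_\mathscr{A} \in P$. Hence $P=\mathscr{A}$, because $a=g(a,1_\mathscr{A}^{(n-1)}) \in P$ for every $a$. This contradicts properness.

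For (2) I will first note that ${\bf r}(P)$ is proper. Indeed $P$ is proper, so it lies in a maximal hyperideal, which is $n$-ary prime; I take this standard fact from \cite{sorc1}. Now let $g(p_1^n) \in {\bf r}(P)$ with $p_i \in S$, and by commutativity put $i=1$. Set $a=g(1_\mathscr{A},p_2^n)$, so that $g(p_1^n)=g(p_1,a,1_\mathscr{A}^{(n-2)})$ by associativity and the identity. Using the cited description of the radical, some power of $g(p_1,a,1_\mathscr{A}^{(n-2)})$ lies in $P$, in the form $g(x^{(w)},1_\mathscr{A}^{(n-w)})$ or $g_{(l)}(x^{(w)})$. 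By commutativity and associativity of the $n$-ary semigroup, this power rearranges as $g(s,b,1_\mathscr{A}^{(n-2)})$, where $s$ is the corresponding power of $p_1$ and $b$ the corresponding power of $a$. Here $s \in S$, since $S$ is an MS and we may pad with $p_1$'s as needed. (I may need $1_\mathscr{A}\in S$, or else pad carefully so that only $g$ of elements of $S$ appear; this bookkeeping is the main obstacle.) The $S$-property then gives $b \in P$, and $b$ is a power of $a$, so $a \in {\bf r}(P)$. A cleaner alternative avoids the power computation altogether. Write ${\bf r}(P)=\bigcap Q_j$ over primes $Q_j \supseteq P$. If $p_1 \notin Q_j$, primality gives $a \in Q_j$. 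So it suffices that $p_1 \notin Q_j$ for every $j$ with $a \notin Q_j$. This again reduces to $S \cap Q_j$ behaviour, so I expect the power argument above to be the one actually used.

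For (3), first $P_Q$ is a hyperideal, by the distributive law and because $P$ is a hyperideal. It is proper: if $1_\mathscr{A} \in P_Q$ then $g(1_\mathscr{A},q,1_\mathscr{A}^{(n-2)})=q \in P$ for $q\in Q$, which is impossible since $Q \subseteq \mathscr{A}\setminus P$ (taking $Q$ nonempty). Next let $g(p_1^n) \in P_Q$ with $p_1 \in S$. Then for each $q \in Q$,
\[
g(g(p_1^n),q,1_\mathscr{A}^{(n-2)}) = g\bigl(p_1, g(g(1_\mathscr{A},p_2^n),q,1_\mathscr{A}^{(n-2)}),1_\mathscr{A}^{(n-2)}\bigr) \in P,
\]
by associativity and commutativity. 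The $S$-property of $P$ then yields $g(g(1_\mathscr{A},p_2^n),q,1_\mathscr{A}^{(n-2)}) \in P$ for all $q$. Hence $g(1_\mathscr{A},p_2^n) \in P_Q$, as required.
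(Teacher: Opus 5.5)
Your proposal follows the paper's proof in all three parts: (1) and (3) are the same computation, and (2) is the same power argument. There you rewrite a power of $g(p_1,a,1_\mathscr{A}^{(n-2)})$ as $g(s,b,1_\mathscr{A}^{(n-2)})$, with $s$ a power of $p_1$ and $b$ a power of $a$, and then apply the $S$-property.

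The bookkeeping you leave open in (2) is a real issue, and the paper does not handle it: it asserts $g(p_i^{(w)},1_\mathscr{A}^{(n-w)})\in S$, which would need $1_\mathscr{A}\in S$. Your idea of padding with $p_1$'s settles it. For $w\le n$, put $x=g(p_1,a,1_\mathscr{A}^{(n-2)})$. Absorption turns $g(x^{(w)},1_\mathscr{A}^{(n-w)})\in P$ into $g(x^{(n)})=g\bigl(g(p_1^{(n)}),g(a^{(n)}),1_\mathscr{A}^{(n-2)}\bigr)\in P$, and $g(p_1^{(n)})\in S$ genuinely. For $w=l(n-1)+1$ you already have $g_{(l)}(p_1^{(w)})\in S$. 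In both cases the $S$-property puts a power of $a$ in $P$, so $a\in{\bf r}(P)$.
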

\begin{proof}
(1) Let $p \in P \cap S$. Then, we get $g(p,1_\mathscr{A}^{(n-1)}) \in P$. Since $P$ is an $n$-ary $S$-hyperideal of $\mathscr{A}$ and $p \in S$, we conclude that $1_\mathscr{A}=g(1_\mathscr{A}^{(n)}) \in P$ which is impossible. 

(2) Let $g(p_1^n) \in {\bf r}(P)$ for $p_1^n \in \mathscr{A}$ and $p_i \in S$ for some $1 \leq i \leq n$. Then, 
there exists $w \in \mathbb {N}$ with $g(g(p_1^n)^ {(w)} , 1_\mathscr{A}^{(n-w)} ) \in P$ for $w \leq n$, or $g_{(l)} (g(p_1^n)^ {(w)} ) \in P$ for $w = l(n-1) + 1$. In the first case,  by associativity law we have \\

$\hspace{2.5cm}g \big( p_i^{(w)},g(p_1^{i-1},1_\mathscr{A},p_{i+1}^n)^{(w)},1_\mathscr{A}^{(n-2w)} \big) \in P$

$\hspace{1.8cm}\Longrightarrow g \big( p_i^{(w)},g(p_1^{i-1},1_\mathscr{A},p_{i+1}^n)^{(w)}, g(1_\mathscr{A}^{(n)}),1_\mathscr{A}^{(n-2w-1)} \big) \in P$

$\hspace{1.8cm}\Longrightarrow g \big( g(p_i^{(w)},1_\mathscr{A}^{(n-w)}),g(p_1^{i-1},1_\mathscr{A},p_{i+1}^n)^{(w)},1_\mathscr{A}^{(n-w-1)} \big) \in P.$\\

Since $S$ is an MS and $p_i \in S$, we obtain $g(p_i^{(w)},1_\mathscr{A}^{(n-w)}) \in S$. Therefore, we get $g(p_1^{i-1},1_\mathscr{A},p_{i+1}^n)^{(w)}, 1_\mathscr{A}^{(n-w)}) \in P$ as $P$ is an $n$-ary $S$-hyperideal of $\mathscr{A}$. This implies that $g(p_1^{i-1}, 1_\mathscr{A},p_{i+1}^n) \in {\bf r}(P)$, as needed. In the second case, by using a similar argument, one can complete the proof.

(3) Clearly, $P_Q \neq \mathscr{A}$. Assume that $g(p_1^n) \in P_Q$ for $p_1^n \in \mathscr{A}$ and $p_i \in S$ for some $1 \leq i \leq n$. Then, we have $g(g(p_1^n),q,1_\mathscr{A}^{(n-2)}) \in P$ for any $q \in Q$. So, $g( p_i, g(p_1^{i-1},q,p_{i+1}^n),1_\mathscr{A}^{(n-2)}) \in P$. Since $P$ is an $n$-ary $S$-hyperideal of $\mathscr{A}$ and $p_i \in S$, we obtain $g(g(p_1^{i-1},1_\mathscr{A},p_{i+1}^n),q,1_\mathscr{A}^{(n-2)})=g(g(p_1^{i-1},q,p_{i+1}^n),1_\mathscr{A}^{(n-1)}) \in P$ which means $g(p_1^{i-1},1_\mathscr{A},p_{i+1}^n) \in P_Q$. Thus, $P_Q$ is an $n$-ary $S$-hyperideal of $\mathscr{A}$.
\end{proof}

\begin{proposition} \label{2}
Let $S$ be an MS of $\mathscr{A}$ and $P$ be an $n$-ary prime hyperideal of $\mathscr{A}$ satisfying $P \cap S=\varnothing$. Then, $P$ is an $n$-ary $S$-hyperideal of $\mathscr{A}$. Furthermore, every hyperideal $Q$ of $\mathscr{A}$ satisfied $Q \cap S=\varnothing$ is contained in an $n$-ary prime $S$-hyperideal.
\end{proposition}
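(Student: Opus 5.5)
The first assertion follows directly from the elementwise characterization of $n$-ary prime hyperideals (Lemma 4.5 in \cite{sorc1}). Suppose $g(p_1^n) \in P$ with $p_i \in S$ for some $i$. By commutativity and associativity of $g$, together with $g(p_i,1_\mathscr{A}^{(n-1)})=p_i$, we can write
$$g(p_1^n)=g\big(p_i,\,g(p_1^{i-1},1_\mathscr{A},p_{i+1}^n),\,1_\mathscr{A}^{(n-2)}\big).$$
Since $P$ is $n$-ary prime, one of the $n$ arguments lies in $P$. The argument $p_i$ is excluded because $p_i\in S$ and $P\cap S=\varnothing$. The argument $1_\mathscr{A}$ is excluded because $P$ is proper, and $1_\mathscr{A}\in P$ would force $p=g(p,1_\mathscr{A}^{(n-1)})\in P$ for all $p$. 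Hence $g(p_1^{i-1},1_\mathscr{A},p_{i+1}^n)\in P$, so $P$ is an $n$-ary $S$-hyperideal.

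For the second assertion I will use a Zorn's lemma argument modeled on the classical fact that an ideal maximal with respect to missing a multiplicative set is prime. Let
$$\Sigma=\{I \ \vert \ I \text{ a hyperideal of } \mathscr{A},\ Q\subseteq I,\ I\cap S=\varnothing\}.$$
This family is nonempty since $Q\in\Sigma$. A union of a chain in $\Sigma$ is again a hyperideal, because $f$ and $g$ have finitely many arguments, so each hyperoperation lands inside a single member of the chain. The union still avoids $S$, so Zorn's lemma gives a maximal element $M\in\Sigma$. Every member of $\Sigma$ is proper: if some $I\in\Sigma$ contained $1_\mathscr{A}$ it would be all of $\mathscr{A}$ and would meet the nonempty set $S$. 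Once $M$ is shown to be $n$-ary prime, the first assertion makes it an $n$-ary prime $S$-hyperideal containing $Q$.

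The main obstacle is proving that $M$ is $n$-ary prime. Suppose $g(p_1^n)\in M$ but no $p_j\in M$. For each $j$, the hyperideal $M_j$ generated by $M\cup\{p_j\}$ strictly contains $M$, so by maximality it meets $S$; pick $s_j\in M_j\cap S$. I plan to describe $M_j$ explicitly as the set of elements lying in some $f(m, g(r,p_j,1_\mathscr{A}^{(n-2)}), 0^{(m-2)})$ with $m\in M$ and $r\in\mathscr{A}$. This uses $\langle p_j\rangle$ from the paper and the fact that in a canonical $m$-ary hypergroup the sum of two hyperideals is $f(I,J,0^{(m-2)})$. Then I compute $g(s_1^n)$, which lies in $S$, by distributing $g$ over $f$ using axiom iii. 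Every term containing some $m\in M$ lies in $M$, and the remaining term is a multiple of $g(p_1^n)$, which is in $M$. Hence $g(s_1^n)\in M\cap S$, a contradiction. The delicate point is the bookkeeping in this expansion: $g$ distributes over $f$ into sets, and I need the resulting set to lie in $M$, which holds because $M$ is closed under $f$ and absorbs $g$. Should the explicit description of $M_j$ be awkward, a fallback is to work with the hyperideal form of primeness: if $g(I_1^n)\subseteq M$ with $I_j\not\subseteq M$, then each $f(M,I_j,0^{(m-2)})$ meets $S$, and the same expansion yields the contradiction.
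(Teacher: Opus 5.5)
Your proposal is correct and follows the paper's proof. The first assertion uses the same factorization $g(p_1^n)=g\big(p_i,g(p_1^{i-1},1_\mathscr{A},p_{i+1}^n),1_\mathscr{A}^{(n-2)}\big)$ together with elementwise primeness, and you also say explicitly why the $1_\mathscr{A}$ slots cannot be the ones lying in $P$. The second assertion likewise obtains an $n$-ary prime hyperideal containing $Q$ and disjoint from $S$, then applies the first part. The one difference is that the paper cites Theorem 4.21 in \cite{sorc1} for the existence of that prime, whereas your Zorn's lemma argument proves it directly and correctly: you take a maximal member $M$ of $\Sigma$ and expand $g(s_1^n)$ via axiom iii to place it in $M\cap S$.
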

\begin{proof}
For the first assertion, assume that $g(p_1^n) \in P$ for $p_1^n \in \mathscr{A}$ and $p_i \in S$ for some $1 \leq i \leq n$. Therefore, $p_i \notin P$ by the hypothesis. Since $P$ is an $n$-ary prime hyperideal of $\mathscr{A}$ and  $g(p_i,g(p_1^{i-1},1_\mathscr{A},p_{i+1}^n),1_\mathscr{A}^{(n-2)})=g(p_1^n) \in P$, we conclude that $g(p_1^{i-1},1_\mathscr{A},p_{i+1}^n) \in P$. Thus, $P$ is an $n$-ary $S$-hyperideal of $\mathscr{A}$. For the second assertion, suppose that $Q$ is a hyperideal of  $\mathscr{A}$ with $Q \cap S=\varnothing$. By Theorem 4.21 in \cite{sorc1}, there exists an $n$-ary prime hyperideal $P$ of $\mathscr{A}$ such that $P \cap S=\varnothing $ and $Q \subseteq P$. Hence, we conclude that $Q$ is an $n$-ary $S$-hyperideal of $\mathscr{A}$ by the first assertion. Thus, the hyperideal $Q$ of $\mathscr{A}$ satisfied $Q \cap S=\varnothing$ is contained in an $n$-ary prime $S$-hyperideal.
\end{proof}
\begin{theorem} \label{7}
Let $S$ be an MS of  $\mathscr{A}$ containing $1_\mathscr{A} $ and  $P$ be a proper hyperideal  of  $\mathscr{A}$. If $P$ is a  maximal $n$-ary $S$-hyperideal of  $\mathscr{A}$, then $P$ is an $n$-ary prime hyperideal.
\end{theorem}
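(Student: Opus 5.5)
The plan is to use the classical fact that a hyperideal maximal among those disjoint from $S$ is prime, and to do this through the colon construction of Theorem \ref{1}(3). Assume $P$ is a maximal $n$-ary $S$-hyperideal, meaning maximal among $n$-ary $S$-hyperideals with respect to inclusion. We verify primeness via the elementwise criterion (Lemma 4.5 in \cite{sorc1}). Suppose $g(p_1^n) \in P$ with $p_1^n \in \mathscr{A}$, and suppose, for a contradiction, that $p_i \notin P$ for every $i$.

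First we reduce to two factors. Put $a=p_1$ and $b=g(1_\mathscr{A},p_2^n)$, so that $g(a,b,1_\mathscr{A}^{(n-2)})=g(p_1^n)\in P$ by associativity and the scalar identity. It is enough to show that $a\in P$ or $b\in P$; if $b\in P$, we continue by induction on the number of non-identity entries, repeating the same step with $p_2$ and $g(1_\mathscr{A}^{(2)},p_3^n)$, and so on. So assume $g(a,b,1_\mathscr{A}^{(n-2)})\in P$ with $b\notin P$.

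Next we apply Theorem \ref{1}(3) with $Q=\{b\}\subseteq \mathscr{A}\setminus P$. This shows that $P_Q=\{x \mid g(x,b,1_\mathscr{A}^{(n-2)})\in P\}$ is an $n$-ary $S$-hyperideal, in particular proper. It contains $P$, because $P$ is a hyperideal, and it contains $a$ by hypothesis. Maximality then forces $P_Q=P$, so $a\in P$. This gives the two-factor prime property, and iterating yields $p_i\in P$ for some $i$, contradicting our assumption.

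Properness of $P$ is given. The hypothesis $1_\mathscr{A}\in S$ is available if we need it, for instance to see that $P\cap S=\varnothing$ forces $1_\mathscr{A}\notin P$ (consistent with Theorem \ref{1}(1)).

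The main obstacle is the step that uses Theorem \ref{1}(3). Its proof there is only sketched, and we should make sure $P_Q$ is a genuine hyperideal and is closed under $f$; this follows from distributivity (axiom iii) and the fact that $P$ is an $m$-ary subhypergroup. A secondary point is the bookkeeping in the reduction from $n$ factors to two, which relies on commutativity (axiom v) and associativity of $g$ to regroup the entries.
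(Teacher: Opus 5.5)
Your proposal is correct and follows essentially the same route as the paper: apply Theorem \ref{1}(3) to the singleton consisting of a complementary product $b\notin P$, use maximality to force $P_{\{b\}}=P$, and conclude that the remaining factor lies in $P$. Your explicit induction on the number of non-identity entries spells out the case that the paper dismisses with ``by a similar argument'' (when the complementary product already lies in $P$), and, as you note, the hypothesis $1_\mathscr{A}\in S$ is never actually needed.
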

\begin{proof}
Let $P$ be a  maximal $n$-ary $S$-hyperideal of  $\mathscr{A}$. Assume that $g(p_1^n) \in P$ and $g(p_1^{i-1},1_\mathscr{A},p_{i+1}^n) \notin P$ for some $1 \leq i \leq n$. Therefore, $P_{g(p_1^{i-1},1_\mathscr{A},p_{i+1}^n)}=\{a \in \mathscr{A} \ \vert \ g(g(p_1^{i-1},1_\mathscr{A},p_{i+1}^n),a,1_\mathscr{A}^{(n-2)}) \in P \}$ is an $n$-ary $S$-hyperideal of $\mathscr{A}$ by Theorem \ref{1} (3). Since $P$ is a  maximal $n$-ary $S$-hyperideal of  $\mathscr{A}$ and $P \subseteq P_{g(p_1^{i-1},1_\mathscr{A},p_{i+1}^n)}$, we have $P_{g(p_1^{i-1},1_\mathscr{A},p_{i+1}^n)}=P$. Since  $p_i \in P_{g(p_1^{i-1},1_\mathscr{A},p_{i+1}^n)}$, we get $p_i \in P$. If $g(p_1^{i-1},1_\mathscr{A},p_{i+1}^n) \in P$, then by  a similar argument, one can complete the proof.
\end{proof}
Next, it is shown that  every minimal $n$-ary prime hyperideal of $\mathscr{A}$ over an $n$-ary $S$-hyperideal is an $n$-ary $S$-hyperideal.
\begin{theorem} \label{6}
Let $S$ be an MS of  $\mathscr{A}$ containing $1_\mathscr{A} $ and  $P$ be an $n$-ary $S$-hyperideal of  $\mathscr{A}$.  If $Q$ is a minimal $n$-ary prime hyperideal of  $\mathscr{A}$ over $P$, then $Q$ is an $n$-ary $S$-hyperideal of  $\mathscr{A}$.
\end{theorem}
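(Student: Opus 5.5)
By Proposition \ref{2}, it is enough to show that $Q \cap S=\varnothing$. Then $Q$, being an $n$-ary prime hyperideal disjoint from $S$, is automatically an $n$-ary $S$-hyperideal of $\mathscr{A}$. The plan is to argue by contradiction, using the minimality of $Q$ over $P$ through an $n$-ary version of the classical description of minimal primes.

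\textbf{Step 1: the auxiliary set $T$.} Suppose $s \in Q \cap S$. Let $T$ be the set of all elements of the form $g(t,u,1_\mathscr{A}^{(n-2)})$ with $t \in \mathscr{A}\backslash Q$ and $u \in S$. Taking $u=1_\mathscr{A}\in S$ gives $\mathscr{A}\backslash Q \subseteq T$. Since $Q$ is $n$-ary prime, $\mathscr{A}\backslash Q$ is an MS (it is nonempty because it contains $1_\mathscr{A}$, and it is closed under $g$ by Lemma 4.5 of \cite{sorc1}). Using commutativity and associativity of $g$ together with $1_\mathscr{A}$ as scalar identity, $g$ of $n$ elements of $T$ regroups as $g(g(t_1^n),g(u_1^n),1_\mathscr{A}^{(n-2)})$, which lies in $T$. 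So $T$ is an MS.

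\textbf{Step 2: $P \cap T=\varnothing$.} If $g(t,u,1_\mathscr{A}^{(n-2)})\in P$ with $u \in S$, then the $S$-hyperideal property of $P$ gives $t=g(t,1_\mathscr{A}^{(n-1)})\in P\subseteq Q$. This contradicts $t\notin Q$.

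\textbf{Step 3: contradiction with minimality.} By Theorem 4.21 of \cite{sorc1}, there is an $n$-ary prime hyperideal $Q'$ with $P\subseteq Q'$ and $Q'\cap T=\varnothing$. Since $\mathscr{A}\backslash Q\subseteq T$, we get $Q'\subseteq Q$, and minimality of $Q$ over $P$ forces $Q'=Q$. On the other hand, $s=g(1_\mathscr{A},s,1_\mathscr{A}^{(n-2)})\in T$ because $1_\mathscr{A}\notin Q$ and $s\in S$. Hence $s\notin Q'=Q$, contradicting $s \in Q$. Therefore $Q\cap S=\varnothing$, and Proposition \ref{2} finishes the proof.

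\textbf{Main obstacle.} The delicate point is Step 1, specifically the verification that $T$ is closed under $g$. This requires carefully rearranging the nested $g$-expressions using associativity of the $n$-ary semigroup $(\mathscr{A},g)$, commutativity, and the scalar identity. A second point needing care is the exact form of Theorem 4.21 of \cite{sorc1}, namely that it produces a prime hyperideal containing $P$ and disjoint from a given MS. This is the same form already used in the proof of Proposition \ref{2}.
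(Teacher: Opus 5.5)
Your proof is correct, but it takes a different route from the paper.

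The paper checks the defining condition directly. Given $g(p_1^n)\in Q$ with $p_i\in S$, it invokes Lemma 3.5 of \cite{mah2}: since $Q$ is minimal over $P$, some $a\notin Q$ and $k\ge 0$ satisfy $g(g(p_1^n)^{k},a,1_\mathscr{A}^{(n-k-1)})\in P$. It then regroups so that the factor $g(p_i^{(k)},1_\mathscr{A}^{(n-k)})\in S$ can be cancelled using that $P$ is an $S$-hyperideal. Finally, primeness of $Q$ together with $a\notin Q$ gives $g(p_1^{i-1},1_\mathscr{A},p_{i+1}^n)\in Q$.

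You instead reduce, via the first assertion of Proposition \ref{2}, to proving $Q\cap S=\varnothing$. You get this by applying Theorem 4.21 of \cite{sorc1} to the multiplicative set $T=g(\mathscr{A}\backslash Q,S,1_\mathscr{A}^{(n-2)})$. In effect you inline the prime-avoiding-a-multiplicative-set argument that underlies the minimal-prime lemma, with $T$ in place of the set of products of powers of $x$ with elements outside $Q$.

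What your version gains:
\begin{itemize}
\item It uses only tools the paper already relies on.
\item It avoids exponent bookkeeping. The paper's expressions $g(g(p_1^n)^k,a,1_\mathscr{A}^{(n-k-1)})$ and $g(\dots,1_\mathscr{A}^{(n-k-2)})$ are written only for small $k$; larger $k$ would need $g_{(l)}$.
\item It isolates the structural fact that minimal primes over an $S$-hyperideal are disjoint from $S$.
\end{itemize}
The paper's argument is shorter once the external lemma is granted.

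Two minor remarks. First, closure of $\mathscr{A}\backslash Q$ under $g$ uses the direction ``prime $\Rightarrow$ elementwise prime''. That direction is the easy one (via principal hyperideals) and the paper uses it throughout, so it is not a gap. Second, the contradiction in Step 3 is unnecessary: $Q=Q'$, $Q'\cap T=\varnothing$ and $S\subseteq T$ (because $1_\mathscr{A}\notin Q$) already give $Q\cap S=\varnothing$. Likewise, the closure of $T$ under $g$ that you flag as the main obstacle is routine once generalized associativity and commutativity of $g$ are available.
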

\begin{proof}
Assume that $g(p_1^n) \in Q$ for $p_1^n \in \mathscr{A}$ and $p_i \in S$ for some $1 \leq i \leq n$. Then, there exists an element  $a \notin Q$ and a non-negative integer $k$ such that $g(g(p_1^n)^k,a,1_\mathscr{A}^{(n-k-1)}) \in P$, by Lemma 3.5 in \cite{mah2}. So, by associativity we have $g \big(g(p_i^k,1_\mathscr{A}^{n-k}), g(p_1^{i-1},1_\mathscr{A},p_{i+1}^n)^k,a, 1_\mathscr{A}^{n-k-2} \big) \in P$. Since $P$ is an $n$-ary $S$-hyperideal of  $\mathscr{A}$ and $g(p_i^k,1_\mathscr{A}^{n-k}) \in S$, we conclude that $g \big( g(p_1^{i-1},1_\mathscr{A},p_{i+1}^n)^k,a, 1_\mathscr{A}^{n-k-1} \big) \in P \subseteq Q$. This implies that $g(p_1^{i-1},1_\mathscr{A},p_{i+1}^n) \in Q$ as $Q$ is an $n$-ary prime hyperideal of  $\mathscr{A}$ and $a \notin Q$. Thus, $Q$ is an $n$-ary $S$-hyperideal of  $\mathscr{A}$.
\end{proof}

The following result verifies that for every hyperideal $P$ of $\mathscr{A}$, there exists an $n$-ary multiplicative  subset $S$ such that $P$ is an $n$-ary $S$-hyperideal.
\begin{theorem} \label{3}
Let $P$ be a hyperideal of $\mathscr{A}$. Then, 
$S=\{x \in \mathscr{A} \ \vert \ g(p_1^{i-1},x,p_{i+1}^n) \in P \Longrightarrow g(p_1^{i-1},1_\mathscr{A},p_{i+1}^n) \in P \ \text{for each} \  p_1^{i-1},p_{i+1}^n \in \mathscr{A}, 1 \leq i \leq n \}$ is the maximal MS with respect to which $P$ remains an $n$-ary $S$-hyperideal.
\end{theorem}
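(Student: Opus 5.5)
The plan has three steps: show that $S$ is an MS, show that $P$ is an $n$-ary $S$-hyperideal for this $S$, and show that $S$ contains every MS $T$ for which $P$ is an $n$-ary $T$-hyperideal. I read "for each $p_1^{i-1},p_{i+1}^n$, $1\le i\le n$" as a universally quantified condition defining membership of $x$. I also note that $P$ must be proper for it to be an $n$-ary $S$-hyperideal at all, so I will assume $P \neq \mathscr{A}$.

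Non-emptiness is immediate, since $1_\mathscr{A}\in S$ trivially: the implication reads ``$g(\dots,1_\mathscr{A},\dots)\in P \Rightarrow$ the same element lies in $P$''.

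\textbf{Closure.} Let $s_1^n\in S$, put $s=g(s_1^n)$, and suppose $g(p_1^{i-1},s,p_{i+1}^n)\in P$. By associativity and commutativity (axiom v) I rewrite this as
\[
g\big(s_1, g(s_2^n, p_1^{i-1},p_{i+1}^n \text{ suitably regrouped}),1_\mathscr{A}^{(n-2)}\big)\in P .
\]
More concretely, I absorb the $p$'s and the remaining $s_j$'s into one factor, using the identity $g(x,1_\mathscr{A}^{(n-1)})=x$ together with the associativity manipulations already used in Theorem \ref{1}(2) and (3). Since $s_1\in S$, I may replace $s_1$ by $1_\mathscr{A}$ and keep membership in $P$. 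I then repeat this with $s_2,\dots,s_n$ in turn, each time moving the next $s_j$ into a slot of its own. After $n$ steps I obtain $g(p_1^{i-1},g(1_\mathscr{A}^{(n)}),p_{i+1}^n)=g(p_1^{i-1},1_\mathscr{A},p_{i+1}^n)\in P$, so $s\in S$. The main obstacle is this regrouping bookkeeping: at each stage the expression must be written as $g(q_1^{j-1},s_j,q_{j+1}^n)$ for suitable $q$'s, so that the defining condition of $s_j\in S$ applies with the correct position index. Commutativity lets me place $s_j$ in any slot, so the index issue disappears once the expression is suitably nested.

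\textbf{$P$ is an $n$-ary $S$-hyperideal.} This is immediate from the definition of $S$: if $g(p_1^n)\in P$ with $p_i\in S$, then taking $x=p_i$ in the defining condition gives $g(p_1^{i-1},1_\mathscr{A},p_{i+1}^n)\in P$.

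\textbf{Maximality.} Let $T$ be any MS for which $P$ is an $n$-ary $T$-hyperideal, and let $t\in T$. Whenever $g(p_1^{i-1},t,p_{i+1}^n)\in P$, the $T$-hyperideal property applied at the slot $i$ where $t$ sits yields $g(p_1^{i-1},1_\mathscr{A},p_{i+1}^n)\in P$. This is exactly the condition for $t\in S$, so $T\subseteq S$. Hence $S$ is the largest such MS.
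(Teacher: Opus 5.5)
Your proposal is correct and follows the paper's proof. Closure uses the same peeling-off argument: rewrite the product as $g\big(s_1, g(p_1^{i-1}, g(s_2^n,1_\mathscr{A}), p_{i+1}^n), 1_\mathscr{A}^{(n-2)}\big)$, use $s_1\in S$, and iterate. The check that $P$ is an $n$-ary $S$-hyperideal is immediate from the definition, and maximality is argued the same way.

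Your additions are all sound refinements that the paper leaves implicit: you check $1_\mathscr{A}\in S$ for non-emptiness, you note that $P$ must be proper, and you show that every admissible MS $T$ lies in $S$, not just those containing $S$, so $S$ is actually the largest such MS.
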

\begin{proof}
First, we show that $S$ is an MS of $\mathscr{A}$. Let $x_1^n \in S$. We verify that $g(x_1^n) \in S$. Assume that $g(p_1^{i-1},g(x_1^n),p_{i+1}^n) \in P$ for $p_1^{i-1},p_{i+1}^n \in \mathscr{A}$. Therefore, we have $g \big(x_1,g(p_1^{i-1},g(x_2^n,1_\mathscr{A}),p_{i+1}^n),1_\mathscr{A}^{(n-2)} \big) \in P$. Then, $g(p_1^{i-1},g(x_2^n,1_\mathscr{A}),p_{i+1}^n) \in P$ as $x_1 \in S$. From $g \big(x_2,g(p_1^{i-1},g(x_3^n,1_\mathscr{A}^{(2)}),p_{i+1}^n ) ,1_\mathscr{A}^{(n-2)} \big)\in P$, it follows that $g(p_1^{i-1},g(x_3^n,1_\mathscr{A}^{(2)}),p_{i+1}^n) \in P$ as $x_2 \in S$. By continuing the process, we obtain $g(p_1^{i-1},1_\mathscr{A},p_{i+1}^n) \in P$ which means $g(x_1^n) \in S$. Clealy, $P$ is an $n$-ary $S$-hyperideal. Now, let us assume that $S^{\prime}$ is an MS of $\mathscr{A}$ containing $S$ for which $P$ is an $n$-ary $S^{\prime}$-hyperideal. Take any $s \in S^{\prime}$. Assume that $g(p_1^{i-1},s,p_{i+1}^n) \in P$. Since $P$ is an $n$-ary $S$-hyperideal of $\mathscr{A}$ and  $s \in S^{\prime}$, we conclude that $g(p_1^{i-1},1_\mathscr{A},p_{i+1}^n) \in P$. This implies that $s \in S$ and so $S^{\prime} \subseteq S$. Thus, $S$ is the maximal MS with respect to which $P$ remains an $n$-ary $S$-hyperideal.
\end{proof}
In the following theorem,  we   present the smallest $n$-ary $S$-hyperideal of $\mathscr{A}$ containing an arbitrary hyperideal $Q$.
\begin{theorem} \label{4}
Let $S$ be an MS of $\mathscr{A}$ containing $1_\mathscr{A}$ and $Q$ be a hyperideal of $\mathscr{A}$. Then $Q^S=\{x \in \mathscr{A} \ \vert \ g(t,x,1_\mathscr{A}^{(n-2)}) \in Q\ \text{for some}  \ \ t \in S\}$   is the smallest $n$-ary $S$-hyperideal of $\mathscr{A}$ containing $Q$.
\end{theorem}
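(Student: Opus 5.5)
We must show three things: $Q \subseteq Q^S$; $Q^S$ is a hyperideal that is an $n$-ary $S$-hyperideal; and every $n$-ary $S$-hyperideal $P$ containing $Q$ contains $Q^S$.

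\textbf{Containment.} This is immediate from $1_\mathscr{A} \in S$. For $x \in Q$ we have $g(1_\mathscr{A},x,1_\mathscr{A}^{(n-2)})=x \in Q$, so $x \in Q^S$ with $t=1_\mathscr{A}$.

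\textbf{Minimality.} Let $P$ be an $n$-ary $S$-hyperideal with $Q \subseteq P$, and let $x \in Q^S$. Then $g(t,x,1_\mathscr{A}^{(n-2)}) \in Q \subseteq P$ for some $t \in S$. Applying the definition of $n$-ary $S$-hyperideal with $p_1=t \in S$ gives $g(1_\mathscr{A},x,1_\mathscr{A}^{(n-2)})=x \in P$. Hence $Q^S \subseteq P$.

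\textbf{$Q^S$ is a hyperideal.} Closure under $g$ is easy. If $x \in Q^S$ with witness $t$, then for any $a_2^n$ we have $g(t,g(x,a_2^n),1_\mathscr{A}^{(n-2)})=g(g(t,x,1_\mathscr{A}^{(n-2)}),a_2^n) \in Q$ by associativity and commutativity, using that $Q$ is a hyperideal. Closure under $f$ is the one genuinely new point, because it needs a common witness. Given $x_1,\dots,x_m \in Q^S$ with witnesses $t_1,\dots,t_m$, set $t=g(t_1,\dots,t_n)$, or use an iterated $g_{(l)}$ product of all $t_j$ padded with $1_\mathscr{A}$. This lies in $S$ because $S$ is an MS containing $1_\mathscr{A}$. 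Each $g(t,x_j,1_\mathscr{A}^{(n-2)})$ is then a $g$-multiple of $g(t_j,x_j,1_\mathscr{A}^{(n-2)}) \in Q$, so it lies in $Q$. By distributivity (condition iii), $g(t,f(x_1^m),1_\mathscr{A}^{(n-2)})=f(g(t,x_1,\dots),\dots,g(t,x_m,\dots)) \subseteq Q$, so $f(x_1^m)\subseteq Q^S$. Inverses are handled the same way: $g(t,-x,1_\mathscr{A}^{(n-2)})=-g(t,x,1_\mathscr{A}^{(n-2)}) \in Q$. Also $0 \in Q^S$. Together these make $(Q^S,f)$ a subhypergroup.

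\textbf{$Q^S$ is an $S$-hyperideal.} Properness requires the extra hypothesis $Q\cap S=\varnothing$, which I will use implicitly. Indeed, $1_\mathscr{A}\in Q^S$ iff some $t\in S$ lies in $Q$. So $Q^S$ is proper exactly when $Q \cap S=\varnothing$, and the statement should be read under that condition, since otherwise no proper $S$-hyperideal contains $Q$ by Theorem \ref{1}(1).

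For the $S$-property, suppose $g(p_1^n) \in Q^S$ with $p_i \in S$, and let $t \in S$ be a witness, so $g(t,g(p_1^n),1_\mathscr{A}^{(n-2)}) \in Q$. Put $y=g(p_1^{i-1},1_\mathscr{A},p_{i+1}^n)$, so that $g(p_1^n)=g(p_i,y,1_\mathscr{A}^{(n-2)})$. Let $t'=g(t,p_i,1_\mathscr{A}^{(n-2)})\in S$. By associativity, $g(t',y,1_\mathscr{A}^{(n-2)})=g(t,g(p_1^n),1_\mathscr{A}^{(n-2)}) \in Q$, so $y \in Q^S$.

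I expect the main obstacle to be the bookkeeping with iterated $n$-ary products when building a common witness for $m$ elements. This is handled by the associativity of $g$ and the presence of $1_\mathscr{A}$ in $S$.
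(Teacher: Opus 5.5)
Your proof is correct and follows the paper's route: closure under $f$ comes from a common witness built as a product of the $t_j$ (using $1_\mathscr{A}\in S$ for padding) together with distributivity, and the $S$-property comes from the new witness $g(t,p_i,1_\mathscr{A}^{(n-2)})\in S$. Your two additions also repair the paper's text. First, $Q^S$ is proper exactly when $Q\cap S=\varnothing$; the paper omits this hypothesis but imposes it wherever it applies the theorem. Second, your minimality step correctly uses the $S$-property of the given hyperideal $P$, whereas the paper's version writes $Q^S$ where the competing hyperideal $Q'$ is meant.
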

\begin{proof}
First, we show that $Q^S$ is a hyperideal of $\mathscr{A}$. Let $x_1^m \in Q^S$. Then, there exists $t_1^m \in S$ such that $g(t_i,x_i,1_\mathscr{A}^{(n-2)}) \in Q$ for all $1 \leq i \leq m$. Put $t=g(g(t_1^n),t_{n+1},\ldots, t_m)$ if $ m \geq n$ and $t=g(t_1^m, 1_\mathscr{A}^{n-m})$ if $m < n$. So, $t \in S$. From $g(t_i,x_i,1_\mathscr{A}^{(n-2)}) \in Q$, it follows that $g(t,x_i,1_\mathscr{A}^{(n-2)}) \in Q$  for all $1 \leq i \leq m$ and so $g(t,f(x_1^m),1_\mathscr{A}^{(n-2)})=f(g(t,x_1,1_\mathscr{A}^{(n-2)}),\ldots,g(t,x_m,1_\mathscr{A}^{(n-2)})) \subseteq Q$ as $Q$ is a hyperideal of $\mathscr{A}$. This implies that $f(x_1^m) \subseteq Q^S$.  Since $g(t,0,1_\mathscr{A}^{(n-2)}) \in Q$ for any $t \in S$, we get $0 \in  Q^S$. It is easy to verify $-x \in  Q^S$ for every $x \in  Q^S$. Assume that $p_1^{i-1},p_{i+1}^n \in  \mathscr{A}$ and $x \in Q^S$. Then, there exists an element $t \in S$ such that $g(t,x,1_\mathscr{A}) \in Q$. Therefore,  we obtain $g(t,g(p_1^{i-1},x,p_{i+1}^n),1_\mathscr{A}^{(n-2)})=g(p_1^{i-1},g(t,x,1_\mathscr{A}^{(n-2)}),p_{i+1}^n) \in Q$ which means $g(p_1^{i-1},x,p_{i+1}^n) \in Q^S$. Thus, $Q^S$ is a hyperideal of $\mathscr{A}$ by Lemma 3.3 in \cite{d1}. Let $g(p_1^n) \in Q^S$ for $p_1^n \in \mathscr{A}$ and $p_i \in S$ for some $1 \leq i \leq n$. Therefore, we obtain $g \big(g(t,p_i,1_\mathscr{A}^{(n-2)}),g(p_1^{i-1},1_\mathscr{A},p_{i+1}^n),1_\mathscr{A}^{(n-2)} \big)=g(t,g(p_1^n),1_\mathscr{A}^{(n-2)}) \in Q$ for some $t \in S$. Since $g(t,p_i,1_\mathscr{A}^{(n-2)}) \in S$, we conclude that $g(p_1^{i-1},1_\mathscr{A},p_{i+1}^n) \in Q^S$. This means that $Q^S$ is an $n$-ary $S$-hyperideals. Now, assume that $Q^{\prime}$ is an $n$-ary $S$-hyperideal containing $Q$. Let $x \in Q^S$. Then, we obtain $g(t,x,1_\mathscr{A}^{(n-2)}) \in Q$ for some $t \in S$. So,  $g(t,x,1_\mathscr{A}^{(n-2)}) \in Q^S$. This implies that $x=g(x,1_\mathscr{A}^{(n-1)})\in Q^S$ as $Q^S$ is an $n$-ary $S$-hyperideal of  $\mathscr{A}$  and $t \in S$. Hence, $Q^S$ is contained in  $Q^{\prime}$. Consequently, $Q^S$ is the smallest $n$-ary $S$-hyperideal of $\mathscr{A}$ containing $Q$.
\end{proof}
The next theorem gives a characterization of $n$-ary $S$-hyperideals.
\begin{theorem} \label{5}
Let $S$ be an MS of  $\mathscr{A}$ and  $P$ be a proper hyperideal of  $\mathscr{A}$.  
\begin{itemize}
\item[\rm{{\bf (1) }}]~ $P$ is an $n$-ary $S$-hyperideal of $\mathscr{A}$.
\item[\rm{{\bf (2) }}]~$P=P_t$ for any $t \in S$ where $P_t=\{a \in \mathscr{A} \ \vert \ g(a,t,1_\mathscr{A}^{(n-2)}) \in P\}$.
\item[\rm{{\bf (3) }}]~ $P=P^S$ where $P^S=\{x \in \mathscr{A} \ \vert \ g(t,x,1_\mathscr{A}^{(n-2)}) \in P\ \text{for some}  \ \ t \in S\}$.
\end{itemize}
\end{theorem}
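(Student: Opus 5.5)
We prove the cycle of implications (1) $\Rightarrow$ (2) $\Rightarrow$ (3) $\Rightarrow$ (1). Throughout, we use commutativity (condition v), associativity of $g$, and the scalar identity $g(p,1_\mathscr{A}^{(n-1)})=p$ to move elements freely inside $g$.

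\emph{(1) $\Rightarrow$ (2).} Fix $t \in S$. The inclusion $P \subseteq P_t$ is immediate, since $P$ is a hyperideal and so $g(a,t,1_\mathscr{A}^{(n-2)}) \in P$ for every $a \in P$. Conversely, let $a \in P_t$. Then $g(a,t,1_\mathscr{A}^{(n-2)}) \in P$ with the entry $t \in S$. The defining property of an $n$-ary $S$-hyperideal, applied at the position of $t$, gives $g(a,1_\mathscr{A}^{(n-1)})=a \in P$. Hence $P=P_t$.

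\emph{(2) $\Rightarrow$ (3).} Again $P \subseteq P^S$ is clear: pick any $t \in S$ (recall $S$ is non-empty) and use the hyperideal property. If $x \in P^S$, choose $t \in S$ with $g(t,x,1_\mathscr{A}^{(n-2)}) \in P$. By commutativity this says $x \in P_t$, and $P_t=P$ by (2), so $x \in P$.

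\emph{(3) $\Rightarrow$ (1).} Suppose $g(p_1^n) \in P$ with $p_i \in S$. Set $y=g(p_1^{i-1},1_\mathscr{A},p_{i+1}^n)$. By associativity and commutativity,
$$g(p_i,y,1_\mathscr{A}^{(n-2)})=g\big(p_1^{i-1},g(p_i,1_\mathscr{A}^{(n-1)}),p_{i+1}^n\big)=g(p_1^n)\in P.$$
Thus $y \in P^S$ with witness $t=p_i$, and so $y \in P$ by (3). Properness of $P$ is part of the hypothesis, so $P$ is an $n$-ary $S$-hyperideal.

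The only delicate point is this associativity rewriting, which expresses $g(p_1^n)$ as $g(p_i,y,1_\mathscr{A}^{(n-2)})$. It is the same manipulation already used in the proofs of Proposition \ref{2} and Theorem \ref{4}, so it can be cited in the same way. Everything else reduces directly to the definitions.
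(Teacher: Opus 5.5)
Your proposal is correct and follows the paper's own proof: the same cycle (1)$\Rightarrow$(2)$\Rightarrow$(3)$\Rightarrow$(1), with the $S$-hyperideal property applied to $g(a,t,1_\mathscr{A}^{(n-2)})$, membership in $P^S$ read as membership in some $P_t$, and the rewriting $g(p_1^n)=g(p_i,y,1_\mathscr{A}^{(n-2)})$ used to place $y$ in $P^S$. You also note explicitly that $S\neq\varnothing$ is needed for $P\subseteq P^S$, a step the paper simply calls obvious.
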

\begin{proof}
(1) $\Longrightarrow$ (2) Let $P$ be an $n$-ary $S$-hyperideal of $\mathscr{A}$ and $t \in S$. The inclusion $P \subseteq P_t$ always holds. Assume that $x \in P_t$. So, $g(x,t,1_\mathscr{A}) \in P $. Since $P$ is an $n$-ary $S$-hyperideal of $\mathscr{A}$ and $t \in S$, we conclude that $x=g(x,1_\mathscr{A}^{(n-1)}) \in P $. Therefore, $P$ contions $P_t$ and so $P=P_t$.

(2) $\Longrightarrow$ (3) Let $P=P_t$ for any $t \in S$ and $x \in P_S$. Then, we get $g(t_0,x,1_\mathscr{A}^{(n-2)}) \in P $ for some $t_0 \in S$. This implies that $x \in P_{t_0}$. By the hypothesis, we get $x \in P$. This means that $P^S \subseteq P$ and so  $P=P^S$ as the reverse containment is obvious.

(3) $\Longrightarrow$ (1) Let $g(p_1^n) \in P$ for $p_1^n \in \mathscr{A}$ and $p_i \in S$ for aome $1 \leq i \leq n$. Therefore, we have $g(p_i,g(p_1^{i-1}, 1_\mathscr{A},p_{i+1}^n),1_\mathscr{A}^{(n-2)}) \in P$  and so $g(p_1^{i-1}, 1_\mathscr{A},p_{i+1}^n) \in P^S $. By the hypothesis, we have $g(p_1^{i-1}, 1_\mathscr{A},p_{i+1}^n) \in P $ which shows $P$ is an $n$-ary $S$-hyperideal of $\mathscr{A}$.
\end{proof}

Recall from \cite{sorc1} that  a proper hyperideal  $P$ of $\mathscr{A}$ is called $n$-ary  primary  if $g(p^n _1) \in P$ and $p_1^ n \in \mathscr{A}$ imply that $p_i \in P$ or $g(p_1^{i-1}, 1_\mathscr{A}, p_{ i+1}^n) \in {\bf r}(P)$ for some $ 1 \leq i \leq n$. Moreover, Theorem 4.28 in \cite{sorc1} verifies that the radical of the $n$-ary primary hyperideal $P$ is prime. In this case, we say that $P$ is an $n$-ary $Q$-primary hyperideal of $\mathscr{A}$ where ${\bf r} (P)=Q$. Now, we investigate the relation between $n$-ary $S$-hyperideals and $n$-ary primary hyperideals. 

The following example verifies that every $n$-ary primary hyperideal may not be an $n$-ary $S$-hyperideal.

\begin{example}
Consider the Krasner $(2,3)$-hyperring $(\mathscr{A}=[0,1],f,g )$ where the 2-ary hyperoperation $``f"$ is defined as $f(p,q) =\{\max\{p, q\}\}$ if $p \neq q$ or $[0,p]$ if $p =q.$ 
Moreover, $``g"$ is the ordinary multiplication on real numbers. Let  $S=(0,0.1)$ and $P=[0,0.5]$. Then, $P$ is a $3$-ary primary hyperideal of $\mathscr{A}$. However,  it is not $3$-ary $S$-hyperideal since $g(0.01,0.7,0.8) \in P $ and $0.01 \in S$ but $g(1,0.7.0.8) \notin P$.
\end{example}
The following result shows that the notion of an $n$-ary $S$-hyperideal is equivalent to that of an $n$-ary $Q$-primary hyperideal,  provided $S$ is the  complement of the minimal prime hyperideal $Q$.
\begin{theorem} \label{ejavad}
Let $P$ be a proper hyperideal of $\mathscr{A}$, $Q$  a  minimal $n$-ary prime hyperideal of $\mathscr{A}$ and $S=\mathscr{A} \backslash Q$. Then, $P$ is an $n$-ary $S$-hyperideal of $\mathscr{A}$ if and only if $P$ is an $n$-ary $Q$-primary hyperideal of $\mathscr{A}$.
\end{theorem}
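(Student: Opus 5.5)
We prove the two implications separately. Here $Q$ is a minimal $n$-ary prime hyperideal of $\mathscr{A}$ (minimal among all $n$-ary primes), $S=\mathscr{A}\backslash Q$, and $S$ is an MS because $Q$ is $n$-ary prime.

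\emph{($\Leftarrow$)} Suppose $P$ is $n$-ary $Q$-primary, so ${\bf r}(P)=Q$. Take $g(p_1^n)\in P$ with $p_i\in S$, i.e.\ $p_i\notin Q$. Write $g(p_1^n)=g\big(p_i,g(p_1^{i-1},1_\mathscr{A},p_{i+1}^n),1_\mathscr{A}^{(n-2)}\big)$ and apply the primary condition to this product. One of two things must hold.
\begin{itemize}
\item[(a)] Some factor other than $p_i$ lies in $P$. That factor is either $g(p_1^{i-1},1_\mathscr{A},p_{i+1}^n)$, which gives the conclusion directly, or $1_\mathscr{A}$, which is impossible because $P$ is proper.
\item[(b)] The product of the remaining factors lies in ${\bf r}(P)=Q$. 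Choosing the complementary index to be $p_i$'s slot would force $p_i\in Q$, which is excluded.
\end{itemize}
The care needed is that the primary definition gives ``$p_j\in P$ or the complementary product is in ${\bf r}(P)$'' for \emph{some} $j$. We need to apply it with the factors ordered so that the only usable alternative is $g(p_1^{i-1},1_\mathscr{A},p_{i+1}^n)\in P$. If the definition is read as quantifying over every $j$, as in \cite{sorc1}, we choose $j$ to be the slot of $g(p_1^{i-1},1_\mathscr{A},p_{i+1}^n)$. Then either that element is in $P$, or $g(p_i,1_\mathscr{A}^{(n-1)})=p_i\in{\bf r}(P)=Q$, which is a contradiction.

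\emph{($\Rightarrow$)} Suppose $P$ is an $n$-ary $S$-hyperideal. By Theorem \ref{1}(1), $P\cap S=\varnothing$, so $P\subseteq Q$ and hence ${\bf r}(P)\subseteq Q$. Every $n$-ary prime containing $P$ contains a minimal prime over $P$. By Theorem \ref{6} (note $1_\mathscr{A}\in S$), any minimal prime $Q'$ over $P$ is an $S$-hyperideal, so $Q'\cap S=\varnothing$, i.e.\ $Q'\subseteq Q$. Minimality of $Q$ then gives $Q'=Q$. Hence $Q$ is the unique minimal prime over $P$ and ${\bf r}(P)=Q$.

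Primariness follows. Let $g(p_1^n)\in P$ with $p_i\notin P$ and $g(p_1^{i-1},1_\mathscr{A},p_{i+1}^n)\notin Q$. Then $g(p_1^{i-1},1_\mathscr{A},p_{i+1}^n)\in S$. Rewrite $g(p_1^n)$ with that element in one slot and $p_i$ in another. The $S$-hyperideal property then yields $p_i\in P$, a contradiction. So $P$ is $n$-ary primary with radical $Q$.

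The main obstacle is the radical computation ${\bf r}(P)=Q$. It relies on the existence of minimal primes over $P$ inside any given prime (a Zorn argument, presumably available in \cite{mah2} or \cite{sorc1}) and on Theorem \ref{6}, which needs $1_\mathscr{A}\in S$. That condition holds here since $1_\mathscr{A}\notin Q$.
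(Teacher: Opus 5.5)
Your proof is correct and is essentially the paper's argument: both directions rest on the rewriting $g(p_1^n)=g\big(p_i,g(p_1^{i-1},1_\mathscr{A},p_{i+1}^n),1_\mathscr{A}^{(n-2)}\big)$ together with the every-index reading of $n$-ary primary. You should commit to that reading outright, because under a literal ``for some $j$'' reading your (a)/(b) split does not work: $j$ could be $p_i$'s slot, which gives only $g(p_1^{i-1},1_\mathscr{A},p_{i+1}^n)\in Q$, or a slot holding $1_\mathscr{A}$, where the condition holds trivially, and neither yields the conclusion.

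The one place you do more than the paper is ${\bf r}(P)=Q$ in ($\Rightarrow$). The paper deduces it directly from $P\cap S=\varnothing$, but that alone only gives ${\bf r}(P)\subseteq Q$; for example, take $P=0$ and $Q=\mathbb{Z}_2\times 0$ in $\mathbb{Z}_2\times\mathbb{Z}_2$. Your argument via Theorem \ref{6}, the existence of minimal primes over $P$, and the minimality of $Q$ correctly supplies the reverse inclusion.
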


\begin{proof}
$\Longrightarrow$ Assume that $P$ is an $n$-ary $S$-hyperideal of $\mathscr{A}$. By Theorem \ref{1} (1), we have $P \cap S=\varnothing$ which means $ {\bf r}(P)=Q$. Now, let $g(p^n _1) \in P$ for  $p_1^ n \in \mathscr{A}$ and $g(p_1^{i-1}, 1_\mathscr{A}, p_{ i+1}^n) \notin {\bf r}(P)$ for some $ 1 \leq i \leq n$. Since $P$ is an $n$-ary $S$-hyperideal of $\mathscr{A}$, $g\big(p_i,g(p_1^{i-1}, 1_\mathscr{A}, p_{ i+1}^n),1_\mathscr{A}^{(n-2)}\big) \in P$ and $g(p_1^{i-1}, 1_\mathscr{A}, p_{ i+1}^n) \in S$, we obtain $p_i=(p_i,1_\mathscr{A}^{(n-1)}) \in P$. Thus, $P$ is an $n$-ary $Q$-primary hyperideal of $\mathscr{A}$.

$\Longleftarrow$ Assume that $P$ is an $n$-ary $Q$-primary hyperideal of $\mathscr{A}$. Let $g(p^n _1) \in P$ for  $p_1^ n \in \mathscr{A}$ and $p_i \in S$ for some $ 1 \leq i \leq n$. So, $p_i \notin {\bf r}(P)$. By the hypothesis, we conclude that $g(p_1^{i-1}, 1_\mathscr{A}, p_{ i+1}^n) \in P$. Thus, $P$ is an $n$-ary $S$-hyperideal of $\mathscr{A}$.
\end{proof}
Next, we prove that every $S$-hyperideal of $\mathscr{A}$ has a primary decomposition if $S$ is formed by taking the complement of a finite union of minimal prime hyperideals.
\begin{theorem} \label{complement}
Let $Q_1^m$  be some  minimal $n$-ary prime hyperideals of $\mathscr{A}$ and $S=\mathscr{A} \backslash \bigcup_{j=1}^mQ_j$. If $P$ is an $n$-ary $S$-hyperideal of $\mathscr{A}$, then $P=\bigcap_{j=1}^mP_j$ such that $P_j=\{x \in \mathscr{A} \ \vert \ g(t_j,x,1_\mathscr{A}^{(n-2)}) \in P \ \text{ for some} \ t_j \notin Q_j\}$ for each $1 \leq j \leq m$.
\end{theorem}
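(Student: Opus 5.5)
We prove the two inclusions $P \subseteq \bigcap_{j=1}^m P_j$ and $\bigcap_{j=1}^m P_j \subseteq P$ separately.

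\textbf{The inclusion $P \subseteq \bigcap_{j=1}^m P_j$.} Each $Q_j$ is a proper hyperideal, so $1_\mathscr{A} \notin Q_j$. For $x \in P$ we may therefore take $t_j=1_\mathscr{A}$, and then $g(1_\mathscr{A},x,1_\mathscr{A}^{(n-2)})=x \in P$. Hence $x \in P_j$ for every $j$.

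\textbf{The inclusion $\bigcap_{j=1}^m P_j \subseteq P$.} Let $x \in \bigcap_j P_j$. For each $j$ pick $t_j \notin Q_j$ with $g(t_j,x,1_\mathscr{A}^{(n-2)}) \in P$. The aim is to produce a single element $s \in S$ with $g(s,x,1_\mathscr{A}^{(n-2)}) \in P$. Once we have it, Theorem \ref{5} ((1)$\Rightarrow$(3)) gives $x \in P^S = P$. Equivalently, the defining property of an $n$-ary $S$-hyperideal, applied with the entry $s \in S$, yields $x=g(x,1_\mathscr{A}^{(n-1)}) \in P$.

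\textbf{Constructing $s$.} A product $t_1 t_2\cdots t_m$ fails, since $t_1$ may lie in $Q_2$. The natural tool is a sum $s \in f(u_1,\dots,u_m)$, padded with $0$'s when $m$ differs from the arity, where each $u_j$ lies outside $Q_j$ but inside $Q_k$ for all $k \neq j$.

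\emph{Choosing the $u_j$.} We may assume the $Q_j$ are distinct and pairwise incomparable, which holds because they are minimal primes. Then for $k \neq j$ there is $a_{jk} \in Q_k \setminus Q_j$. Set
\[
u_j = g\bigl(t_j, \textstyle\prod_{k\neq j} a_{jk}\bigr),
\]
where the product is formed with $g$ and padded with $1_\mathscr{A}$ (nesting $g$ if $m$ is large). Since $Q_j$ is prime and no factor lies in $Q_j$, we get $u_j \notin Q_j$. Since $a_{jk}$ is a factor, $u_j \in Q_k$ for $k \neq j$. Also $g(u_j,x,1_\mathscr{A}^{(n-2)}) \in P$, because it is a multiple of $g(t_j,x,1_\mathscr{A}^{(n-2)})$.

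\emph{The sum avoids each $Q_j$.} Take any $s \in f(u_1,\dots,u_m)$. If $s \in Q_j$, then the reversibility of the canonical hypergroup lets us solve for $u_j$ inside $f(s,-u_1,\dots)$, omitting $u_j$ itself. All terms there lie in $Q_j$, so $u_j \in Q_j$, a contradiction. Hence $s \in S$.

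\emph{The sum lands in $P$.} By distributivity (condition iii),
\[
g(s,x,1_\mathscr{A}^{(n-2)}) \in f\bigl(g(u_1,x,1_\mathscr{A}^{(n-2)}),\dots,g(u_m,x,1_\mathscr{A}^{(n-2)})\bigr) \subseteq P .
\]

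\textbf{Main obstacle.} The hard step is this prime-avoidance-style construction of an element of $S$ in the hyperring setting. It needs the careful handling of $f$ with $m$ inputs, including padding by $0$ and iterating $f$ when the number of summands exceeds $m$, and the reversibility argument showing $s$ avoids each $Q_j$. The case $m=1$ is trivial, since then $s=t_1$ works directly.
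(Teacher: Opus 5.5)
Your proof is correct. Its skeleton matches the paper's. Both get $P\subseteq\bigcap_j P_j$ from $t_j=1_\mathscr{A}$. For the reverse inclusion, both produce an element of $S$ inside the hyperideal generated by $t_1,\dots,t_m$, then use distributivity to get $g(s,x,1_\mathscr{A}^{(n-2)})\in P$ and the $S$-hyperideal property to conclude $x\in P$.

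The difference lies in how that element of $S$ is obtained:
\begin{itemize}
\item The paper lets $I$ be the hyperideal generated by $\{t_1,\dots,t_m\}$ and argues by contradiction. If $I\cap S=\varnothing$, then $I\subseteq\bigcup_j Q_j$, and a Prime Avoidance Lemma for Krasner $(m,n)$-hyperrings (cited from earlier work) puts $I$ inside a single $Q_j$. This forces $t_j\in Q_j$, a contradiction.
\item You instead build the element explicitly. You multiply each $t_j$ by witnesses $a_{jk}\in Q_k\setminus Q_j$, sum the results, and use reversibility of the canonical hypergroup to show the sum avoids every $Q_j$. In effect you prove the special case of prime avoidance you need.
\end{itemize}

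The paper's route is shorter and never uses minimality of the $Q_j$, since prime avoidance holds for any finite family of primes. Yours is self-contained: it relies only on reversibility, elementwise primeness and distributivity. Its cost is that it needs the $Q_j$ to be pairwise incomparable. You get this from minimality; alternatively, discarding any $Q_j$ contained in another leaves $S$ unchanged.

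Your version also spells out the padding by $0$, and the iteration of $f$, needed when the number of primes differs from the arity of $f$. The paper glosses over this point by writing $t\in f\bigl(g(a_1,t_1,1_\mathscr{A}^{(n-2)}),\dots,g(a_m,t_m,1_\mathscr{A}^{(n-2)})\bigr)$ with the same letter $m$ for both.

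The paper additionally invokes Theorems \ref{4} and \ref{ejavad} to describe the $P_j$ as primary components. That is not needed for the equality you were asked to prove.
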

\begin{proof}
Let $S=\mathscr{A} \backslash \bigcup_{j=1}^mQ_j$ and $P$ be an $n$-ary $S$-hyperideal of $\mathscr{A}$. By Theorem \ref{4}, we conclude that $P_j$ is an $S_j$-hyperideal of $\mathscr{A}$ where $S_j=\mathscr{A} \backslash Q_j$ for all $1 \leq j \leq m$. By Theorem \ref{ejavad}, $P_j$ is an $n$-ary $Q_j$-primary hyperideal of $\mathscr{A}$ for all $1 \leq j \leq m$. Now, we show that $P=\bigcap_{j=1}^mP_j$. Let $p \in P$. So, $g(p,1_\mathscr{A}^{(n-1)}) \in P$. Since $1_\mathscr{A} \notin Q_j$ for all $1 \leq j \leq m$, we get $p \in P_j$ which means $P \subseteq \bigcap_{j=1}^mP_j$. Take any $p \in \bigcap_{j=1}^mP_j$. Therefore, for any $1 \leq j \leq m$,  $g(t_j,p,1_\mathscr{A}^{(n-2)}) \in P$ for some  $ t_j \notin Q_j$. Let $I$ be the hyperideal generated by $X=\{t_1,\ldots,t_m\}$. If $I \cap S=\varnothing$, then $I \subseteq \bigcup_{j=1}^mQ_j$ which implies $I \subseteq Q_j$ for some $1 \leq j \leq m$ by Theorem 5.1 in \cite{mah4}. Since $t_j \in I$, we obtain $t_j \in Q_j$ which is impossible. Then, $I \cap S \neq \varnothing$. Assume that $t \in I \cap S$. Therefore, $t \in f\big(g(a_1,t_1,1_\mathscr{A}^{(n-2)}),\ldots,g(a_m,t_m,1_\mathscr{A}^{(n-2)})\big)$ for some $a_1^m \in \mathscr{A}$. It follows that $g(p,t, 1_\mathscr{A}^{(n-2)}) \in  g\big(p,f\big(g(a_1,t_1,1_\mathscr{A}^{(n-2)}),\ldots,g(a_m,t_m,1_\mathscr{A}^{(n-2)})\big)\big)=f\big(g(a_1,g(p,t_1,1_\mathscr{A}^{(n-2)}),1_\mathscr{A}^{(n-2)}),\ldots,g(a_m,g(p,t_m,1_\mathscr{A}^{(n-2)}),1_\mathscr{A}^{(n-2)}) \big) \subseteq P$. Hence, we have $p \in P$ as  $P$ is an $n$-ary $S$-hyperideal of $\mathscr{A}$ and $t \in S$. This implies that $\bigcap_{j=1}^mP_j \subseteq P$. Thus, we conclude that $P=\bigcap_{j=1}^mP_j$.
\end{proof}
The following result indicates that   the  intersection of an arbitrary family of $n$-ary $S$-hyperideals of $\mathscr{A}$ is also an $n$-ary $S$-hyperideal.
\begin{proposition} \label{intersection}
Let $S$ be an MS of  $\mathscr{A}$ and $\{P_j\}_{j \in \Delta}$ be a nonempty set of the $n$-ary $S$-hyperideals of $\mathscr{A}$. Then, $\bigcap_{j \in \Delta} P_j$ is an $n$-ary $S$-hyperideal  of $\mathscr{A}$.
\end{proposition}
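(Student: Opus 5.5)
Set $P=\bigcap_{j \in \Delta} P_j$. I would argue directly from the definition of an $n$-ary $S$-hyperideal, in three steps: $P$ is a hyperideal, $P$ is proper, and $P$ has the $S$-property.

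First, $P$ is a hyperideal. An intersection of hyperideals is a hyperideal. It contains $0$ and is closed under negatives. For $x_1^m \in P$ we have $f(x_1^m)\subseteq P_j$ for every $j$, so $f(x_1^m)\subseteq P$. Likewise $g(p_1^{i-1},x,p_{i+1}^n)\in P_j$ for every $j$ whenever $x\in P$. Lemma 3.3 in \cite{d1} then applies exactly as it did in the proof of Theorem \ref{4}.

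Second, $P$ is proper. Since $\Delta$ is nonempty, pick some $j_0\in\Delta$; then $P\subseteq P_{j_0}\neq\mathscr{A}$. This is the only place the nonemptiness hypothesis is used, and it is essential: the empty intersection would be $\mathscr{A}$ itself.

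Third, the $S$-condition. Suppose $g(p_1^n)\in P$ with $p_1^n\in\mathscr{A}$ and $p_i\in S$ for some $1\le i\le n$. Then $g(p_1^n)\in P_j$ for each $j\in\Delta$. Since each $P_j$ is an $n$-ary $S$-hyperideal, $g(p_1^{i-1},1_\mathscr{A},p_{i+1}^n)\in P_j$ for every $j$, and hence this element lies in $P$.

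Alternatively, one could use the characterization in Theorem \ref{5}(2): $P_t=\bigcap_j (P_j)_t=\bigcap_j P_j=P$ for every $t\in S$. There is no real obstacle; the only point needing care is properness, which uses $\Delta\neq\varnothing$.
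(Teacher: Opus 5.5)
Your argument is correct and is essentially the paper's proof: your third step is exactly what the paper does, placing $g(p_1^{i-1},1_\mathscr{A},p_{i+1}^n)$ in each $P_j$ and hence in the intersection. You also check explicitly that $\bigcap_{j\in\Delta}P_j$ is a proper hyperideal, using $\Delta\neq\varnothing$ for properness; the paper leaves this implicit, so your version is slightly more complete.
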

\begin{proof}
Let  $\{P_j\}_{j \in \Delta}$ be a nonempty set of the $n$-ary $S$-hyperideals of $\mathscr{A}$. Assume that $g(p_1^n) \in \bigcap_{j \in \Delta} P_j$ and $p_i \in S$ for some $1 \leq i \leq n$. Since $P_j$ is an $n$-ary $S$-hyperideals for all $j \in \Delta$ and $p_i \in S$, we get $g(p_1^{i-1},1_{\mathscr{A}},p_{i+1}^n) \in P_j$. It follows that $g(p_1^{i-1},1_{\mathscr{A}},p_{i+1}^n) \in \cap_{j \in \Delta} P_j$. Thus, $\bigcap_{j \in \Delta} P_j$ is an $n$-ary $S$-hyperideals of $\mathscr{A}$.
\end{proof}
An element $p \in \mathscr{A}$ is regular if there exist $p_1^{n-1} \in \mathscr{A}$ such that $p=g \big(g(p^{(n)}),p_1^{n-1}\big)$. The set of all regular elements in $\mathscr{A}$ is denoted by $V(\mathscr{A})$. A Krasner $(m,n)$-hyperring $\mathscr{A}$ is called regular, if all of elements
in $\mathscr{A}$ are regular elements,  i.e., $\mathscr{A}=V(\mathscr{A})$. More details can be found in \cite{Hila}. We say that a hyperideal $P$ of $\mathscr{A}$ is $n$-ary semiprime if $g(p^{(n)}) \in P$ for all $p \in \mathscr{A}$ implies that $p \in P$. Moreover, the notion of Krasner $(m,n)$-hyperring of fractions was studied and investigated in \cite{mah5}. 
\begin{theorem} \label{11}
Let $S$ be an MS of $\mathscr{A}$ such that $S \subseteq V(\mathscr{A})$. Then, every $n$-ary $S$-hyperideal of $\mathscr{A}$ is $n$-ary semiprime if and only if $S^{-1}\mathscr{A}$ is regular. 
\end{theorem}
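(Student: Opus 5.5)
The plan is to use that an $n$-ary $S$-hyperideal is exactly a hyperideal equal to its $S$-saturation: by Theorem \ref{5}, $P=P^S$. Such hyperideals correspond to contractions of hyperideals of $S^{-1}\mathscr{A}$. I will work with the hyperring of fractions from \cite{mah5}. There $x/s=y/t$ means $g(u,x,t,1_\mathscr{A}^{(n-3)})=g(u,y,s,1_\mathscr{A}^{(n-3)})$ for some $u\in S$ (or the analogous cancellation condition), and $g$ acts componentwise on numerators and denominators. If needed, I first replace $S$ by $S\cup\{1_\mathscr{A}\}$. This does not change which hyperideals are $S$-hyperideals, and it lets me invoke Theorem \ref{4}.

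$(\Leftarrow)$ Assume $S^{-1}\mathscr{A}$ is regular. Let $P$ be an $n$-ary $S$-hyperideal and let $g(p^{(n)})\in P$. Regularity of $p/1_\mathscr{A}$ gives fractions $a_1/s_1,\ldots,a_{n-1}/s_{n-1}$ with
$$p/1_\mathscr{A}=g\big(g(p^{(n)})/1_\mathscr{A},a_1/s_1,\ldots,a_{n-1}/s_{n-1}\big).$$
Unwinding this equality of fractions produces some $u\in S$ and a product $s'$ of the $s_j$ such that $g(u,s',p,1_\mathscr{A}^{(n-3)})$ equals an element of the form $g(u,g(p^{(n)}),a_1^{n-1})$-type. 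The latter lies in $P$ because $g(p^{(n)})\in P$. Since $u$ and $s'$ lie in $S$ and $S$ is an MS, combining them gives some $t\in S$ with $g(t,p,1_\mathscr{A}^{(n-2)})\in P$, i.e. $p\in P^S$. Theorem \ref{5} then gives $p\in P$. So $P$ is semiprime, and this direction does not use $S\subseteq V(\mathscr{A})$.

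$(\Rightarrow)$ Assume every $n$-ary $S$-hyperideal is semiprime, and take $x/s\in S^{-1}\mathscr{A}$. Let $Q=\langle g(x^{(n)})\rangle$. By Theorem \ref{4}, $Q^S$ is an $n$-ary $S$-hyperideal, provided it is proper; if $Q^S=\mathscr{A}$, then some $t\in S$ has $g(t,1_\mathscr{A},\dots)\in Q$, and the argument below goes through directly. Since $g(x^{(n)})\in Q\subseteq Q^S$, semiprimeness gives $x\in Q^S$. Hence $g(t,x,1_\mathscr{A}^{(n-2)})=g(r,g(x^{(n)}),1_\mathscr{A}^{(n-2)})$ for some $t\in S$ and $r\in\mathscr{A}$. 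In $S^{-1}\mathscr{A}$ this reads
$$x/1_\mathscr{A}=g\big(g((x/1_\mathscr{A})^{(n)}),r/t,1^{(n-2)}\big),$$
so $x/1_\mathscr{A}$ is regular. Next, $s\in S\subseteq V(\mathscr{A})$ gives $s=g(g(s^{(n)}),b_1^{n-1})$. From this I want to show $1_\mathscr{A}/s$ is regular, using $1_\mathscr{A}/s=g\big(g((1_\mathscr{A}/s)^{(n)}),\,\ast\big)$ with $\ast$ built from $s$, the $b_j$, and suitable powers of $s$ in the denominators. Finally, $x/s=g(x/1_\mathscr{A},1_\mathscr{A}/s,1^{(n-2)})$, and in a commutative $n$-ary semigroup a $g$-product of regular elements is regular: multiply the regularity witnesses componentwise and use associativity and commutativity. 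Hence $S^{-1}\mathscr{A}$ is regular.

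The main obstacle is the bookkeeping of fraction equalities with the $n$-ary operation. One issue is the correct form of the cancellation element $u\in S$ in $S^{-1}\mathscr{A}$. The other is arranging exponents so that $1_\mathscr{A}/s$ becomes regular, since an $n$-fold power and an $(n-1)$-tuple of witnesses must be matched. I would handle the latter by taking the witnesses $b_j/g(s^{(n)})$-type fractions and verifying the identity by clearing denominators. The closure of regular elements under $g$ also needs a short direct check, which I expect to be routine.
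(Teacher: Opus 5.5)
Your argument is essentially the paper's. For ($\Leftarrow$) you unwind the regularity of $p/1_\mathscr{A}$ to get $p\in P^S=P$; the paper does the same via $p/1_\mathscr{A}\in S^{-1}P$. For ($\Rightarrow$) you use the same hyperideal $\langle g(x^{(n)})\rangle^S$ and semiprimeness, and you arrive at the same identity $g(t,x,1_\mathscr{A}^{(n-2)})=g(r,g(x^{(n)}),1_\mathscr{A}^{(n-2)})$ with $t\in S$. Your remark that $x\in Q^S$ holds trivially when $Q^S=\mathscr{A}$ is a nice simplification: it absorbs the paper's separate case $\langle g(p^{(n)})\rangle\cap S\neq\varnothing$.

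Where you diverge, one step fails as sketched: the regularity of $1_\mathscr{A}/s$. Witnesses with the $b_j$ in the numerators and powers of $s$ in the denominators only add factors of $s$ to the denominator. For instance, with $b_j/g(s^{(n)})$ the result has denominator $g(g(s^{(n)})^{(n)})$, and it is not $1_\mathscr{A}/s$ in general. Over $\mathbb{Q}$ with $n=3$, $s=2$, $b_1=b_2=1/2$, you get $1/2048$.

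The repair is immediate. $1_\mathscr{A}/s$ is a unit, so $n-1$ copies of $s/1_\mathscr{A}$ are witnesses:
$G\big(G((1_\mathscr{A}/s)^{(n)}),(s/1_\mathscr{A})^{(n-1)}\big)=g(s^{(n-1)},1_\mathscr{A})/g(s^{(n)})=1_\mathscr{A}/s$.
Alternatively, you can drop the factorization and the closure lemma entirely, as the paper does, by writing
$x/s=G\big(G((x/s)^{(n)}),\,g(s^{(n-1)},r)/t,\,(1_\mathscr{A}/1_\mathscr{A})^{(n-2)}\big)$.
This follows from $g(t,x,1_\mathscr{A}^{(n-2)})=g(r,g(x^{(n)}),1_\mathscr{A}^{(n-2)})$ after clearing denominators. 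Either way, the hypothesis $S\subseteq V(\mathscr{A})$ is never needed, and the paper's proof does not use it either.

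A minor point: $S\cup\{1_\mathscr{A}\}$ need not be closed under $g$, since $g(s_1,s_2,1_\mathscr{A}^{(n-2)})$ need not lie in $S$. Use instead the set of all $g(s_1^k,1_\mathscr{A}^{(n-k)})$ with $0\le k\le n-1$ and $s_j\in S$. It is closed under $g$ and has the same $n$-ary $S$-hyperideals as $S$.
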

\begin{proof}
$\Longrightarrow$ Assume that $\frac{p}{t}$ is an arbitrary element in $S^{-1}\mathscr{A}$. Put $P=\langle g(p^{(n)}) \rangle$. Let $P \cap S \neq \varnothing$. Then, there exists some $s \in S$ such that $g(s, g(p^{(n)}),1_\mathscr{A}^{(n-2)}) \in S$. This implies that\\ 

$\hspace{2.8cm}\frac{p}{t}=\frac{g \big(g(t^{(n)}),g(s, g(p^{(n)}),1_\mathscr{A}^{(n-2)}),p, 1_\mathscr{A}^{(n-3)}\big)}{g \big(g(t^{(n)}),g(s, g(p^{(n)}),1_\mathscr{A}^{(n-2)}),t ,1_\mathscr{A}^{(n-3)}\big)}$

$\hspace{3.1cm}=\frac{g \big(g(p^{(n)}), g(t^{(n)}),g(s,p,1_\mathscr{A}^{(n-2)}), 1_\mathscr{A}^{(n-3)}\big)}{g \big(g(t^{(n)}),g(s, g(p^{(n)}),1_\mathscr{A}^{(n-2)}),t ,1_\mathscr{A}^{(n-3)}\big)}$

$\hspace{3.1cm}=G \big( G(\frac{p}{t}^{(n)}),\frac{g(t^{(n)})}{g(s,g(p^{(n)}),1_\mathscr{A}^{(n-2)})},\frac{g(s, p,1_\mathscr{A}^{(n-2)})}{t},\frac{1_\mathscr{A}}{1_\mathscr{A}}^{(n-3)}\big).$\\

Hence, $\frac{p}{t}$ is a regular element in $\mathscr{A}$ and so $S^{-1}\mathscr{A}$ is regular. Now, let $P \cap S=\varnothing$. By Theorem \ref{4}, $P^S=\{x \in \mathscr{A} \ \vert \ g(t,x,1_\mathscr{A}^{(n-2)}) \in P\ \text{for some}  \ \ t \in S\}$ is an $n$-ary $S$-hyperideal of $\mathscr{A}$ containing $P$. Hence, $P^S$ is an $n$-ary semiprime hyperideal of $\mathscr{A}$ which implies $p \in P^S$ as $g(p^{(n)}) \in P^S$. Therefore, we obtain $g(r,p,1_\mathscr{A}^{(n-2)}) \in P$ for some $r \in \mathscr{A}$ which means $g(r,p,1_\mathscr{A}^{(n-2)})=g(s,g(p^{(n)}),1_\mathscr{A}^{(n-2)})$ for some $s \in \mathscr{A}$.\\

$\hspace{2.8cm}\frac{p}{t}=\frac{g \big(g(t^{(n-1)},1_\mathscr{A}),r,p,1_\mathscr{A}^{(n-3)}\big)}{g \big(g(t^{(n-1)},1_\mathscr{A}),r,t,1_\mathscr{A}^{(n-3)} \big)}$

$\hspace{3.1cm}=\frac{g \big(g(t^{(n-1)},1_\mathscr{A}),g(r,p,1_\mathscr{A}^{(n-2)}),1_\mathscr{A}^{(n-2)}\big)}{g \big(t^{(n)},r,1_ \mathscr{A}^{(n-2)}\big)}$

$\hspace{03.1cm}=\frac{g \big(g(t^{(n-1)},1_\mathscr{A}),g(s,g(p^{(n)}),1_\mathscr{A}^{(n-2)}),1_\mathscr{A}^{(n-2)}\big)}{g \big(g(t^{(n)}),r, 1_\mathscr{A}^{(n-2)}\big)}$

$\hspace{3.1cm}=\frac{g \big(g(t^{(n-1)},s),g(p^{(n)}),1_\mathscr{A}^{(n-2)}\big)}{g \big ( g(t^{(n)}),r, 1_\mathscr{A}^{(n-2)} \big)}$

$\hspace{3.1cm}=G \big(G(\frac{p}{t}^{(n)}), \frac{g(t^{(n-1)},s)}{r},\frac{1_\mathscr{A}}{1_\mathscr{A}}^{(n-2)}\big).$\\

Then, we conclude that $\frac{p}{t}$ is a regular element in $S^{-1}\mathscr{A}$ and so  $S^{-1}\mathscr{A}$ is regular. 

$\Longleftarrow$  Let  $P$ be an $n$-ary $S$-hyperideal of $\mathscr{A}$  and $g(p^{(n)}) \in P$ for $p \in \mathscr{A}$. Then, there exist $\frac{p_1}{t_1}, \ldots, \frac{p_{n-1}}{t_{n-1}} \in S^{-1}\mathscr{A}$ such that $\frac{p}{1_\mathscr{A}}=G \big(G(\frac{p}{1_\mathscr{A}}^{(n)}), \frac{p_1}{t_1}, \ldots, \frac{p_{n-1}}{t_{n-1}} \big)$  
  as $S^{-1}\mathscr{A}$ is regular and $\frac{p}{1_\mathscr{A}} \in S^{-1}\mathscr{A}$. Since $G(\frac{p}{1_\mathscr{A}}^{(n)})=\frac{g(p^{(n)})}{g(1_\mathscr{A}^{(n)})} \in S^{-1}P$, we obtain $\frac{p}{1_\mathscr{A}}  \in S^{-1}P$. This implies that $g(r,p,1_\mathscr{A}^{(n-2)}) \in P$ for some $r \in S$. Then, we conclude that $p \in P$ as $P$ is an $n$-ary $S$-hyperideal of $\mathscr{A}$. Consequently, every $n$-ary $S$-hyperideal of $\mathscr{A}$ is $n$-ary semiprime. 
\end{proof}

An element $p$ in $\mathscr{A}$ is invertible if there exists $q \in \mathscr{A}$ such that $g(p,q,1_{\mathscr{A}}^{(n-2)})=1_{\mathscr{A}}$.  By  $\mathbf{U}(\mathscr{A})$, we mean all invertible elements in $\mathscr{A}$ \cite{sorc1}.
\begin{proposition} \label{8}
Let $S$ be an MS of $\mathscr{A}$ containing $1_\mathscr{A}$. Then, any proper hyperideal of $\mathscr{A}$ is an $n$-ary $S$-hyperideal if and only if $S \subseteq \mathbf{U}(\mathscr{A})$.
\end{proposition}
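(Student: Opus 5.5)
We prove the two implications separately. The key tool for the forward direction is Theorem \ref{1}(1), applied to a cleverly chosen proper hyperideal.

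\textbf{($\Longleftarrow$)} Assume $S \subseteq \mathbf{U}(\mathscr{A})$ and let $P$ be any proper hyperideal of $\mathscr{A}$. Suppose $g(p_1^n) \in P$ with $p_i \in S$ for some $1 \leq i \leq n$.

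Since $p_i$ is invertible, choose $q \in \mathscr{A}$ with $g(p_i,q,1_\mathscr{A}^{(n-2)})=1_\mathscr{A}$. As $P$ is a hyperideal, $g(q,g(p_1^n),1_\mathscr{A}^{(n-2)}) \in P$.

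Using associativity and commutativity (conditions ii and v), rewrite this element as
$$g\big(g(p_i,q,1_\mathscr{A}^{(n-2)}),\,g(p_1^{i-1},1_\mathscr{A},p_{i+1}^n),\,1_\mathscr{A}^{(n-2)}\big)=g\big(1_\mathscr{A},\,g(p_1^{i-1},1_\mathscr{A},p_{i+1}^n),\,1_\mathscr{A}^{(n-2)}\big).$$
By the scalar identity property vi, this equals $g(p_1^{i-1},1_\mathscr{A},p_{i+1}^n)$. Hence $g(p_1^{i-1},1_\mathscr{A},p_{i+1}^n) \in P$, so $P$ is an $n$-ary $S$-hyperideal.

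\textbf{($\Longrightarrow$)} Assume every proper hyperideal is an $n$-ary $S$-hyperideal, and suppose some $s \in S$ is not invertible. Consider the principal hyperideal $\langle s\rangle=\{g(r,s,1_\mathscr{A}^{(n-2)}) \mid r \in \mathscr{A}\}$.

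First, $\langle s\rangle$ is proper. If $1_\mathscr{A} \in \langle s\rangle$, then $1_\mathscr{A}=g(r,s,1_\mathscr{A}^{(n-2)})$ for some $r$, contradicting that $s$ is not invertible.

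So $\langle s\rangle$ is a proper hyperideal, hence by hypothesis an $n$-ary $S$-hyperideal. On the other hand, $s=g(1_\mathscr{A},s,1_\mathscr{A}^{(n-2)}) \in \langle s\rangle$, so $s \in \langle s\rangle \cap S$. This contradicts Theorem \ref{1}(1), which gives $\langle s\rangle \cap S=\varnothing$.

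Alternatively, one can argue directly: $g(s,1_\mathscr{A}^{(n-1)}) \in \langle s\rangle$ with $s \in S$ forces $1_\mathscr{A} \in \langle s\rangle$.

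Either way, $S \subseteq \mathbf{U}(\mathscr{A})$. The hypothesis $1_\mathscr{A} \in S$ is not essential to either argument.

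\textbf{Main obstacle.} The only delicate point is the associativity bookkeeping in the reverse direction: pulling $p_i$ out of $g(p_1^n)$ and pairing it with $q$ correctly in the $n$-ary setting. The paper already performs such manipulations routinely, for instance in Proposition \ref{2}, so this step should go through.
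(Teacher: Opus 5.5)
Your proof is correct and follows the paper's argument. The forward direction uses the proper principal hyperideal $\langle s\rangle$ containing $s=g(s,1_\mathscr{A}^{(n-1)})$, which is exactly the content of Theorem \ref{1}(1). The reverse direction multiplies $g(p_1^n)$ by an inverse of $p_i$. Your remark that the hypothesis $1_\mathscr{A}\in S$ is never used is also accurate.
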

\begin{proof}
$\Longrightarrow$ Let each proper hyperideal of $\mathscr{A}$ be an $n$-ary $S$-hyperideal. Assume that $ s \in S \backslash \mathbf{U}(\mathscr{A})$. Then, $\langle s \rangle$ is a proper hyperideal of $\mathscr{A}$ and so it is an $n$-ary $S$-hyperideal of $\mathscr{A}$ by the hypothesis. Since $g(s,1_\mathscr{A}^{(n-1)})=s \in \langle s \rangle$, we obtain $1_\mathscr{A} \in  \langle s \rangle$ which is impossible. Hence, 
$S \subseteq \mathbf{U}(\mathscr{A})$. 

$\Longleftarrow$ Let $P$ be an arbitrary hyperideal of $\mathscr{A}$ and $g(p_1^n) \in P$ for $p_1^n \in \mathscr{A}$ and $p_i \in S$ for some $1 \leq i \leq n$. Since $P$ is a hyperideal of $\mathscr{A}$ and  $S \subseteq \mathbf{U}(\mathscr{A})$, we get $g(p_1^{i-1},g(p_i,p_i^{-1},1_\mathscr{A}^{(n-2)}),p_{i+1}^n)=g(p_i^{-1},g(p_1^n),1_\mathscr{A}^{(n-2)}) \in P$ which implies  $g(p_1^{i-1},1_\mathscr{A},p_{i+1}^n) \in P$. Thus, any proper hyperideal of $\mathscr{A}$ is an $n$-ary $S$-hyperideal.
\end{proof}
A  Krasner $(m,n)$-hyperring $\mathscr{A}$ is said to be an $n$-ary  hyperintegral domain, if $\mathscr{A}$ is commutative and $g(p_1^n)=0$ implies that $p_i=0$ for some $1 \leq i \leq n$ \cite{sorc1}.
\begin{theorem}\label{9}
Let $S$ be an MS of $\mathscr{A}$ such that $S=\{x \in \mathscr{A} \ \vert \ \frac{x}{1_{\mathscr{A}} } \in \mathbf{U}(S^{-1}\mathscr{A})\}$. Then, $\mathscr{A}$ is an $n$-ary hyperintegral domain and $S=\mathscr{A}-\{0\}$ if and only if   there is no $n$-ary $S$-hyperideal    in $\mathscr{A}$, other than $\langle 0 \rangle$.
\end{theorem}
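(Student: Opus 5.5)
Both directions will be derived from Theorem \ref{5}, which characterises an $n$-ary $S$-hyperideal $P$ by $P=P_t$ for every $t\in S$. Throughout I read ``$\langle 0\rangle$'' as the zero hyperideal $\{0\}$. I also read ``no $n$-ary $S$-hyperideal other than $\langle 0\rangle$'' as saying that the $S$-hyperideals are exactly $\{0\}$; in particular $\{0\}$ itself is required to be an $n$-ary $S$-hyperideal.

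($\Longrightarrow$) Assume $\mathscr{A}$ is an $n$-ary hyperintegral domain and $S=\mathscr{A}\setminus\{0\}$.
\begin{itemize}
\item[\rm{(a)}] $\{0\}$ is an $S$-hyperideal. If $g(p_1^n)=0$ and $p_i\neq 0$, write $g(p_1^n)=g\big(p_i,g(p_1^{i-1},1_\mathscr{A},p_{i+1}^n),1_\mathscr{A}^{(n-2)}\big)$. The domain property forces $g(p_1^{i-1},1_\mathscr{A},p_{i+1}^n)=0$, since $p_i\ne0$ and $1_\mathscr{A}\neq 0$.
\item[\rm{(b)}] No other $S$-hyperideal exists. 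By Theorem \ref{1}(1), any $S$-hyperideal $P$ satisfies $P\cap S=\varnothing$. Hence $P\subseteq\{0\}$, and since $0\in P$ we get $P=\{0\}$.
\end{itemize}

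($\Longleftarrow$) Assume $\{0\}$ is the only $n$-ary $S$-hyperideal.

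\emph{Step 1: $\mathscr{A}$ is a hyperintegral domain.} Suppose $g(p_1^n)=0$ with every $p_j\neq 0$. I want to show that $\{0\}$ being an $S$-hyperideal forces some $p_j=0$. That needs some $p_j\in S$, which is not given directly, so I pass through Step 2 first.

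\emph{Step 2: $S=\mathscr{A}\setminus\{0\}$.} Here I use the hypothesis $S=\{x \mid x/1_\mathscr{A}\in\mathbf{U}(S^{-1}\mathscr{A})\}$ and argue as follows.
\begin{itemize}
\item[\rm{(i)}] Take a nonzero $x$ and the proper hyperideal $\langle x\rangle$. If $x/1_\mathscr{A}$ is not a unit of $S^{-1}\mathscr{A}$, then $\langle x\rangle\cap S=\varnothing$. Indeed, an element $g(r,x,1_\mathscr{A}^{(n-2)})\in S$ would make $x/1_\mathscr{A}$ a unit in $S^{-1}\mathscr{A}$, by the saturation hypothesis.
\item[\rm{(ii)}] Theorem \ref{4} then gives the smallest $S$-hyperideal $\langle x\rangle^S$ containing $\langle x\rangle$. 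It is proper, because $1_\mathscr{A}\in\langle x\rangle^S$ would put some $t\in S$ inside $\langle x\rangle$. (If $1_\mathscr{A}\notin S$, I would first note that $S\cup\{1_\mathscr{A}\}$ works equally well, since the hypothesis shows $1_\mathscr{A}\in S$.)
\item[\rm{(iii)}] $\langle x\rangle^S$ contains $x\neq0$, contradicting uniqueness of $\{0\}$. Hence every nonzero $x$ lies in $S$, so $\mathscr{A}\setminus\{0\}\subseteq S$.
\item[\rm{(iv)}] Conversely, $0\notin S$: otherwise Theorem \ref{1}(1) applied to the $S$-hyperideal $\{0\}$ fails.
\end{itemize}
So $S=\mathscr{A}\setminus\{0\}$.

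\emph{Step 1 concluded.} In the situation of Step 1, $p_1\in S$. Since $\{0\}$ is an $S$-hyperideal, $g(1_\mathscr{A},p_2^n)=0$. Iterating with $p_2,p_3,\dots$ eventually yields $1_\mathscr{A}=g(1_\mathscr{A}^{(n)})=0$, which is absurd. So some $p_j=0$ and $\mathscr{A}$ is an $n$-ary hyperintegral domain.

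The main obstacle is Step 2(i). It requires translating ``$x/1_\mathscr{A}$ is not a unit in $S^{-1}\mathscr{A}$'' into ``$\langle x\rangle\cap S=\varnothing$'', which uses the fraction-hyperring calculus of \cite{mah5}. I expect to need that $G(x/1_\mathscr{A}, r/t, 1^{(n-2)})=1/1$ means $g(u,g(x,r,1_\mathscr{A}^{(n-2)}),1_\mathscr{A}^{(n-2)})=g(u,t,1_\mathscr{A}^{(n-2)})\in S$ for some $u\in S$, and the saturation hypothesis then places the product in $S$.
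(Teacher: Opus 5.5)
Your argument is correct. The forward direction and the proof that $S=\mathscr{A}\setminus\{0\}$ match the paper: the domain property handles $\langle 0\rangle$, Theorem \ref{1}(1) rules out any other candidate, and $\langle x\rangle^S$ from Theorem \ref{4} handles a nonzero $x\notin S$.

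The difference lies in how you show that $\mathscr{A}$ is a hyperintegral domain. The paper takes an $n$-ary prime hyperideal disjoint from $S$ (Theorem 4.21 of \cite{sorc1}) and makes it an $S$-hyperideal by Proposition \ref{2}. Uniqueness then forces it to be $\langle 0\rangle$, and the paper concludes via ``$\langle 0\rangle$ prime means domain'' (Theorem 4.6 of \cite{sorc1}). You instead apply the $S$-hyperideal condition for $\{0\}$ with $S=\mathscr{A}\setminus\{0\}$ and peel off the nonzero factors until $1_\mathscr{A}=0$. This is shorter and needs neither the existence theorem for primes nor the prime characterization of domains. It does use that $\{0\}$ is itself an $S$-hyperideal.

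That reading is the right one. The paper's step ``$0\notin S$'' silently needs it as well, and without it the converse fails. For $S=\mathscr{A}$, the ring $S^{-1}\mathscr{A}$ is trivial, so the saturation hypothesis holds, yet there are no $S$-hyperideals at all. You are also more careful than the paper in checking $1_\mathscr{A}\in S$ (needed for Theorem \ref{4}) and the properness of $\langle x\rangle^S$.

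One small correction to your closing remark: Step 2(i) only needs the trivial implication. If $s=g(r,x,1_\mathscr{A}^{(n-2)})\in S$, then $G(\frac{x}{1_\mathscr{A}},\frac{r}{s},\frac{1_\mathscr{A}}{1_\mathscr{A}}^{(n-2)})=\frac{s}{s}=\frac{1_\mathscr{A}}{1_\mathscr{A}}$, with no use of saturation. Saturation is used only to turn $x\notin S$ into ``$\frac{x}{1_\mathscr{A}}$ is not a unit''. The computation you sketch there is the converse implication, which you do not need.
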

\begin{proof}
 $\Longrightarrow$ Assume that $\mathscr{A}$ is an $n$-ary hyperintegral domain and $S=\mathscr{A}-\{0\}$. Let $g(p_1^n) \in \langle 0 \rangle$ for $p_1^n \in \mathscr{A}$ and $p_i \in S$ for some $1 \leq i \leq n$. Since  $\mathscr{A}$ is an $n$-ary hyperintegral domain,  $g(g(p_1^{i-1},1_\mathscr{A},p_{i+1}^n),p_i,1_\mathscr{A}^{(n-2)}) =0$ and $ p_i \neq 0$, we get $g(p_1^{i-1},1_\mathscr{A},p_{i+1}^n)=0$ which means $\langle 0 \rangle$ is an $n$-ary $S$-hyperideal of $\mathscr{A}$. Now, let $P \neq \{0\}$ be a  hyperideal of $\mathscr{A}$. Then, $P \cap S \neq \varnothing$ which means $P$ is not an $n$-ary $S$-hyperideal of $\mathscr{A}$ by Theorem \ref{1} (1). Thus, there is no $n$-ary $S$-hyperideal    in $\mathscr{A}$, other than $\langle 0 \rangle$.

$\Longleftarrow$ Let  $a \notin \mathscr{A} \backslash S$ such that $a \neq 0$. By the hypothesis, we conclude that $\langle a \rangle \cap S=\varnothing$. Hence, ${\langle a \rangle}^S$ is   $n$-ary $S$-hyperideal of $\mathscr{A}$ by Theorem \ref{4}. Since $\langle 0 \rangle$ is the only $n$-ary $S$-hyperideal of $\mathscr{A}$,  we obtain ${\langle a \rangle}^S=\langle 0 \rangle$ which is impossible. Thus, $S=\mathscr{A}-\{0\}$. Now, we show that $\mathscr{A}$ is an $n$-ary hyperintegral domain. Since $0 \notin S$, there exists an $n$-ary prime hyperideal $Q$ of $\mathscr{A}$ disjoint from $S$ by Theorem 4.21 in \cite{sorc1}. Therefore, $Q$ is an $n$-ary $S$-hyperideal of $\mathscr{A}$ by Proposition \ref{2}. Since $\langle 0 \rangle$ is the only $n$-ary $S$-hyperideal of $\mathscr{A}$, we have $Q=\langle 0 \rangle$ and so $\langle 0 \rangle$ is an $n$-ary prime hyperideal of $\mathscr{A}$. This implies that $\mathscr{A}$ is an $n$-ary hyperintegral domain by Theorem 4.6 in \cite{sorc1}. 
\end{proof}
\begin{theorem} \label{10}
Let $Q$ be a proper hyperideal of $\mathscr{A}$ and $S=f(Q,1_\mathscr{A},0^{(m-2)})$. Then, every proper hyperideal of $\mathscr{A}$ containing $Q$  is an $n$-ary $S$-hyperideal of $\mathscr{A}$. Furthermore, if $P$ is  a maximal   $n$-ary $S$-hyperideal  of $\mathscr{A}$ and $Q$ is contained in $J_{(m,n)}(\mathscr{A})$, then $Q \subseteq P$. 
\end{theorem}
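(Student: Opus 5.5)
Theorem \ref{10} is the hyperring version of the familiar fact that every element of $1+Q$ is a unit modulo $Q$. I would prove its two parts in order, the second using the first through the maximality of $P$.

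\textbf{First assertion.} Let $P$ be a proper hyperideal with $Q \subseteq P$. Suppose $g(p_1^n) \in P$ and $p_i \in S$, so $p_i \in f(q,1_\mathscr{A},0^{(m-2)})$ for some $q \in Q$. Put $y=g(p_1^{i-1},1_\mathscr{A},p_{i+1}^n)$; by associativity and commutativity, $g(p_1^n)=g(p_i,y,1_\mathscr{A}^{(n-2)})$.
\begin{itemize}
\item By the distributive law (iii), $g(p_i,y,1_\mathscr{A}^{(n-2)}) \in f\big(g(q,y,1_\mathscr{A}^{(n-2)}),\, y,\, 0^{(m-2)}\big)$, using $g(0,y,1_\mathscr{A}^{(n-2)})=0$ and $g(1_\mathscr{A},y,1_\mathscr{A}^{(n-2)})=y$.
\item Write $a=g(p_1^n)\in P$ and $b=g(q,y,1_\mathscr{A}^{(n-2)})$. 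Since $q\in Q\subseteq P$ and $P$ is a hyperideal, $b\in P$.
\item Then $a \in f(b,y,0^{(m-2)})$. By the reversibility axiom of the canonical $m$-ary hypergroup, $y \in f(a,-b,0^{(m-2)})$.
\item Because $P$ is an $m$-ary subhypergroup, $f(a,-b,0^{(m-2)})\subseteq P$, so $y\in P$.
\end{itemize}
Hence $P$ is an $n$-ary $S$-hyperideal. I must also check that $S$ is an MS, since the definition requires this.
\begin{itemize}
\item Take $s_j \in f(q_j,1_\mathscr{A},0^{(m-2)})$ for $j=1,\ldots,n$.
\item Expanding $g(s_1^n)$ by repeated distributivity lands in $f$-combinations of $1_\mathscr{A}=g(1_\mathscr{A}^{(n)})$ and terms each having at least one factor $q_j$. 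Each such term lies in $Q$.
\item Collapsing those terms with $f$ gives some $q'\in Q$ with $g(s_1^n)\in f(q',1_\mathscr{A},0^{(m-2)})$.
\end{itemize}
This bookkeeping with the $m$-ary hyperoperation (associativity of $f$ and $0$ as identity) is the main routine obstacle. I would handle it by induction on the number of factors, treating one coordinate at a time.

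\textbf{Second assertion.} Let $P$ be a maximal $n$-ary $S$-hyperideal and suppose $Q\subseteq J_{(m,n)}(\mathscr{A})$. Consider the hyperideal $P+Q$, meaning the hyperideal generated by $P\cup Q$, i.e. the union of sets $f(p,q,0^{(m-2)})$. If it is proper, then by the first assertion it is an $n$-ary $S$-hyperideal containing $P$, so maximality gives $P+Q=P$, i.e. $Q\subseteq P$.

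It remains to rule out $P+Q=\mathscr{A}$.
\begin{itemize}
\item Then $1_\mathscr{A}\in f(p,q,0^{(m-2)})$ for some $p\in P$ and $q\in Q$.
\item By reversibility, $p\in f(1_\mathscr{A},-q,0^{(m-2)})$, and $-q\in Q$, so $p\in S$.
\item This contradicts Theorem \ref{1}(1), which gives $P\cap S=\varnothing$.
\end{itemize}
This argument does not actually use $Q\subseteq J_{(m,n)}(\mathscr{A})$ beyond $Q$ being proper, so I would double-check whether the intended route instead uses that $f(q,1_\mathscr{A},0^{(m-2)})$ consists of invertible elements when $q\in J_{(m,n)}(\mathscr{A})$. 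For example, one could combine Proposition \ref{8} with Theorem \ref{7} when $1_\mathscr{A}\in S$. Either route yields $Q\subseteq P$.
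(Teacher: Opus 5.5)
Your proof is correct. The first assertion is argued exactly as in the paper: distributivity, then reversibility. Your check that $S$ is an MS supplies a point the paper takes for granted.

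For the second assertion you take a genuinely different and more economical route.

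\textbf{The paper's route.} It uses $Q\subseteq J_{(m,n)}(\mathscr{A})$, via Theorem 4.14 of \cite{sorc1}, to get $S\subseteq\mathbf{U}(\mathscr{A})$. By Proposition \ref{8} every proper hyperideal is then an $n$-ary $S$-hyperideal, so a maximal $n$-ary $S$-hyperideal $P$ is an honest maximal hyperideal. For $q\in Q\setminus P$ this forces $f(P,\langle q\rangle,0^{(m-2)})=\mathscr{A}$. Hence $p\in f\big(1_\mathscr{A},-g(a,q,1_\mathscr{A}^{(n-2)}),0^{(m-2)}\big)\subseteq\mathbf{U}(\mathscr{A})$ for some $p\in P$, contradicting properness of $P$.

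\textbf{Your route.} You apply the first assertion directly to $f(P,Q,0^{(m-2)})$. You rule out $f(P,Q,0^{(m-2)})=\mathscr{A}$ by placing $p$ in $S$ rather than in $\mathbf{U}(\mathscr{A})$, which contradicts Theorem \ref{1}(1).

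Your suspicion is right: this never uses $Q\subseteq J_{(m,n)}(\mathscr{A})$. Your argument therefore shows that hypothesis is superfluous for the conclusion $Q\subseteq P$. The paper's route only buys the extra fact, unneeded here, that under the $J$ hypothesis $P$ is a maximal hyperideal of $\mathscr{A}$.

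You can drop your closing hedge. Your main argument stands on its own, and Theorem \ref{7} plays no role in either proof.
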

\begin{proof}
Suppose that  $P$ is an arbitrary  hyperideal of $\mathscr{A}$ such that $Q \subseteq P$ and $S=f(Q,1_\mathscr{A},0^{(m-2)})$. Assume that $g(p_1^n) \in P$ for $p_1^n \in \mathscr{A}$ and $p_i \in S$ for some $1 \leq i \leq n$. Therefore, $p_i \in f(q,1_\mathscr{A},0^{(m-2)})$ for some $q \in Q$ and so $g(p_1^n) \in g \big(p_1^{i-1},f(q,1_\mathscr{A},0^{(m-2)}), p_{i+1}^n \big)=f \big(g(p_1^{i-1},q,p_{i+1}^n),g(p_1^{i-1},1_\mathscr{A},p_{i+1}^n),0^{(m-2)} \big)$. This implies that $g(p_1^{i-1},1_\mathscr{A},p_{i+1}^n) \in f(-g(p_1^{i-1},q,p_{i+1}^n), g(p_1^n),0^{(m-2)})$. Since $P$ contains $f(-g(p_1^{i-1},q,p_{i+1}^n), g(p_1^n),0^{(m-2)})$, we obtain $g(p_1^{i-1},1_\mathscr{A},p_{i+1}^n) \in P$. Hence,  $P$ is an $n$-ary $S$-hyperideal of $\mathscr{A}$. 

Now, suppose that  the hyperideal $Q$ is contained in $J_{(m,n)}(\mathscr{A})$. Then, by Theorem 4.14 in \cite{sorc1} we conclude that $S =f(Q,1_\mathscr{A},0^{(m-2)})\subseteq {\bf U}(\mathscr{A})$. Therefore, each proper hyperideal of $\mathscr{A}$ is an $n$-ary $S$-hyperideal by Proposition \ref{8}. Let $Q \nsubseteq P$. Then, there exist some $q \in Q$ such that $q \notin P$. Clearly, $f(P,\langle q \rangle,0^{(m-2)}) \subsetneqq  P$. By the hypothesis,  we conclude that  $f(P,\langle q \rangle,0^{(m-2)})=\mathscr{A}$ which implies $1_\mathscr{A} \in f(p,g(a,q,1_\mathscr{A}^{(n-2)}),0^{(m-2)})$ for some $p \in P$ and $a \in \mathscr{A}$. Hence, we get $p \in f(1_\mathscr{A},-g(a,q,1_\mathscr{A}^{(n-2)}) \subseteq {\bf U}(\mathscr{A})$ which is impossible. Thus, $Q \subseteq P$.
\end{proof}
\begin{theorem} \label{12}
Let $P$ be a hyperideal of $\mathscr{A}$ such that for every $p \in P$, the intersection   of all
minimal $n$-ary prime hyperideals containing $p$ is contained in $P$. Then, $P$ is an $n$-ary $S$-hyperideal of $\mathscr{A}$ where $S=\mathscr{A} \backslash \cup_{Q \in Min(\mathscr{A})} Q $.
\end{theorem}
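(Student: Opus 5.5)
Write $y=g(p_1^{i-1},1_\mathscr{A},p_{i+1}^n)$. We assume $g(p_1^n)\in P$ with $p_i\in S$, so $p_i$ lies in no minimal $n$-ary prime hyperideal of $\mathscr{A}$. The goal is $y\in P$, and by hypothesis it suffices to show that $y$ belongs to every minimal $n$-ary prime hyperideal containing some fixed element $p\in P$. The natural choice is $p=g(p_1^n)$. Associativity and commutativity give
\[
p=g(p_1^n)=g\big(p_i,y,1_\mathscr{A}^{(n-2)}\big)\in P .
\]
The hypothesis of the theorem then yields
\[
\bigcap\{Q\in Min(\mathscr{A}) \ \vert \ p\in Q\}\subseteq P .
\]

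Next we show that $y$ lies in every minimal prime $Q$ with $p\in Q$. Since $Q$ is $n$-ary prime, Lemma 4.5 in \cite{sorc1} applied to $g(p_i,y,1_\mathscr{A}^{(n-2)})\in Q$ gives $p_i\in Q$, $y\in Q$, or $1_\mathscr{A}\in Q$. The last is impossible because $Q$ is proper. The first is impossible because $p_i\in S=\mathscr{A}\backslash\bigcup_{Q\in Min(\mathscr{A})}Q$. Hence $y\in Q$.

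Therefore $y$ lies in the intersection of all minimal $n$-ary prime hyperideals containing $p$, and so $y\in P$. This is precisely the $S$-hyperideal condition.

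It remains to check the side conditions. The set $S$ is an MS by the complement-of-a-union-of-primes argument: if $s_1^n\notin Q$ for every minimal $Q$, then $g(s_1^n)\notin Q$ by primeness. The hyperideal $P$ is proper, as the definition requires. For this, $1_\mathscr{A}\in S$, and $S\cap P=\varnothing$ by the same argument with $y=1_\mathscr{A}$: if $s\in P\cap S$, then $s$ lies in no minimal prime, so the intersection over the empty family is $\mathscr{A}\subseteq P$, a contradiction.

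The main obstacle is this properness point. It may need to be assumed, or it follows from the above when $P\cap S=\varnothing$. One should also make sure that ``minimal $n$-ary prime hyperideals containing $p$'' means $Min(\mathscr{A})$ members containing $p$, and not primes minimal over $\langle p\rangle$. If the latter is intended, we argue the same way, using that any prime minimal over $\langle p\rangle$ contains a minimal prime of $\mathscr{A}$ and applying the avoidance step to that minimal prime.
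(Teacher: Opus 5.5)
Your argument is correct and is essentially the paper's proof. You fix $p=g(p_1^n)=g(p_i,y,1_\mathscr{A}^{(n-2)})$ and note that $p_i$ avoids every member of $Min(\mathscr{A})$. Primeness then puts $y$ in each minimal prime containing $p$, hence in the intersection and in $P$. Arguing prime-by-prime, as you do, is the precise form of the paper's step, which the paper phrases through the possibly non-prime intersection.

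Two of your side remarks are off, although the intended reading does not need them. First, properness cannot be derived: $P=\mathscr{A}$ satisfies the hypothesis vacuously, so your ``contradiction'' is circular, and properness is a tacit assumption, as it is in the paper. Second, your fallback for the reading ``primes minimal over $\langle p\rangle$'' fails, because $p$ need not lie in the minimal prime $Q_0$ sitting inside such a prime. Under that reading the statement is in fact false: in the ring $\mathbb{Z}$, the ideal $(6)$ satisfies that hypothesis but is not an $S$-ideal for $S=\mathbb{Z}\backslash\{0\}$.
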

\begin{proof}
Let $g(p_1^n) \in P$ for $p_1^n \in \mathscr{A}$ and $p_i \in S$ for some $1 \leq i \leq n$. Assume that $Q$ is the intersection   of all
minimal $n$-ary prime hyperideals containing $g(p_1^n)$. By the hypothesis, we obtain $Q \subseteq P$.  From $p_i \in S$ for some $1 \leq i \leq n$, it follows that $p_i$ is not in any minimal $n$-ary prime hyperideal of $\mathscr{A}$ and so $p_i \notin Q$. Since $g(g(p_1^{i-1},1_\mathscr{A},p_{i+1}^n),p_i,1_ \mathscr{A}^{(n-2)}) \in Q$  and $p_i \notin Q$, we conclude that $g(p_1^{i-1},1_\mathscr{A},p_{i+1}^n)$ is in every minimal $n$-ary prime hyperideal containing $g(p_1^n)$ and so $g(p_1^{i-1},1_\mathscr{A},p_{i+1}^n) \in Q \subseteq P$. Consequently, $P$ is an $n$-ary $S$-hyperideal of $\mathscr{A}$.
\end{proof}
  Here, we present a version of the Prime Avoidance Lemma adapted to the context of $S$-hyperideals.
\begin{theorem} \label{avoidance}
Let $S$ be an MS of  $\mathscr{A}$ containing $1_{\mathscr{A}}$ and $P,P_1^n$ be hyperideals of $\mathscr{A}$ such that $P \subseteq \bigcup_{j=1}^n P_j$ but no $P_j$ can be omitted from the union. If $P_t$ is an $n$-ary $S$-hyperideal of $\mathscr{A}$ for some $1 \leq t \leq n$ and $P_j \cap S \neq \varnothing$ for all $j \in \{1,\dots,t\!-\!\!1,\hat{t},t\!+\!1,\ldots,n\}$, then $P \subseteq P_t$. 
\end{theorem}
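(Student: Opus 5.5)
We argue by contradiction, in the style of the classical Prime Avoidance Lemma. Assume $P \nsubseteq P_t$. Since $P \subseteq \bigcup_{j=1}^n P_j$ and no $P_j$ can be omitted, $P \nsubseteq \bigcup_{j\neq t} P_j$. Hence we may fix an element $a \in P$ with $a \notin \bigcup_{j \neq t} P_j$; necessarily $a \in P_t$. For each $j \neq t$ choose $s_j \in P_j \cap S$, which exists by hypothesis, and set
$$s=g(s_1,\ldots,s_{t-1},1_\mathscr{A},s_{t+1},\ldots,s_n).$$
Since $1_\mathscr{A}\in S$ and $S$ is an MS, $s \in S$. Since each $P_j$ is a hyperideal, $s \in P_j$ for every $j \neq t$. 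By Theorem \ref{1}(1), $P_t \cap S=\varnothing$, so $s \notin P_t$. We will also need an element $b \in P \backslash P_t$, which exists by our assumption $P \nsubseteq P_t$.

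The next step is to combine these elements into one element of $P$ that lies in no $P_j$. We would try $y=g(b,s,1_\mathscr{A}^{(n-2)})$ and elements of $f(a,y,0^{(m-2)}) \subseteq P$.

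First, take $j\neq t$. Here $y \in P_j$ because $s \in P_j$, while $a \notin P_j$. Hence no element $z$ of $f(a,y,0^{(m-2)})$ lies in $P_j$; otherwise $a \in f(z,-y,0^{(m-2)}) \subseteq P_j$.

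Next, take $j=t$. Here $a \in P_t$. If $y=g(b,s,1_\mathscr{A}^{(n-2)})$ were in $P_t$, then $b=g(b,1_\mathscr{A}^{(n-1)}) \in P_t$, because $P_t$ is an $n$-ary $S$-hyperideal and $s\in S$; this contradicts $b \notin P_t$. So $y\notin P_t$, and by the same reversibility argument any $z \in f(a,y,0^{(m-2)})$ is not in $P_t$.

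Thus any $z\in f(a,y,0^{(m-2)})$ lies in $P$ but in none of $P_1,\ldots,P_n$, which contradicts $P\subseteq\bigcup_{j=1}^n P_j$. We conclude $P\subseteq P_t$.

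The main obstacle is choosing the auxiliary element correctly. We need the elements of $P_j \cap S$ for $j \neq t$ to force membership in every $P_j$ with $j\neq t$, while the $S$-hyperideal property of $P_t$, together with Theorem \ref{1}(1), prevents membership in $P_t$. We also need the standard canonical-hypergroup fact that $z \in f(a,y,0^{(m-2)})$ implies $a \in f(z,-y,0^{(m-2)})$ and $y\in f(z,-a,0^{(m-2)})$; this is the reversibility axiom of $(\mathscr{A},f)$, and we would invoke it explicitly. The hypothesis that no $P_j$ can be omitted is used only to obtain $a$.
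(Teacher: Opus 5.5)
Your argument is correct and is essentially the paper's proof run in contrapositive form. The paper first uses the element of $P\setminus\bigcup_{j\neq t}P_j$ and reversibility of $f$ to show $P\cap\bigcap_{j\neq t}P_j\subseteq P_t$, and then cancels $g(s_1^{t-1},1_\mathscr{A},s_{t+1}^n)\in S$ using the $S$-hyperideal property of $P_t$. You cancel that same element first, producing $y\in \bigl(P\cap\bigcap_{j\neq t}P_j\bigr)\setminus P_t$, and then run the same reversibility argument on $f(a,y,0^{(m-2)})$ to reach a contradiction.

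Your element-by-element treatment of $f(a,y,0^{(m-2)})$ is cleaner than the paper's set-level step at that point. Your appeal to Theorem \ref{1}(1) to get $s\notin P_t$ is harmless but never actually used.
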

\begin{proof}
By the assumption, we get $P \nsubseteq \bigcup_{\substack{1 \leq j \leq i \\ j \neq t}} P_j$. Then, there exists some $p \in P$ such that $p \notin \bigcup_{\substack{1 \leq j \leq i \\ j \neq t}} P_j$. So, $p \in P_t$. Assume that $a \in P \cap (\bigcap_{\substack{1 \leq j \leq i \\ j \neq t}} P_j)$. Therefore, we have $f(a,p,0^{(m-2)}) \subseteq P$. Let  $f(a,p,0^{(m-2)}) \subseteq \bigcup_{\substack{1 \leq j \leq i \\ j \neq t}} P_j$. Then  $f(p,a,-a,0^{(m-3)})=f(f(a,p,0^{(m-2)}),-a,0^{(m-2)}) \subseteq \bigcup_{\substack{1 \leq j \leq i \\ j \neq t}} P_j$. So,  we obtain $p \in  \bigcup_{\substack{1 \leq j \leq i \\ j \neq t}} P_j$ which is impossible.  Thus, we conclude that $f(a,p,0^{(m-2)}) \nsubseteq \bigcup_{\substack{1 \leq j \leq i \\ j \neq t}} P_j$ which implies $f(a,p,0^{(m-2)}) \subseteq P_t$ as $f(a,p,0^{(m-2)}) \subseteq P$. Hence, we get $a \in P_t$  as $p \in P_t$. This means that $P \cap (\bigcap_{\substack{1 \leq j \leq i \\ j \neq t}} P_j) \subseteq P_t$ which implies $g(P_1^{t-1},P,P_{t+1}^n) \subseteq P_t$. By the hypothesis, we have $s_j \in P_j \cap S$ for all  $j \in \{1,\dots,t\!-\!\!1,\hat{t},t\!+\!1,\ldots,n\}$. Then, we have $g(g(s_1^{t-1},1_{\mathscr{A}},s_{t+1}^n),x,1_{\mathscr{A}})=g(s_1^{t-1},x,s_{t+1}^n) \in P_t$ for all $x \in P$. Since $P_t$ is an $n$-ary $S$-hyperideal of $\mathscr{A}$ and $ g(s_1^{t-1},1_{\mathscr{A}},s_{t+1}^n)\in S$, we have $x=g(x,1_{\mathscr{A}}^{(n-1)}) \in P_t$ which indicates $P \subseteq P_t$.
\end{proof}
\section{Stability of $n$-ary $S$-hyperideals}
In this section, we characterize the behavior of $n$-ary $S$-hyperideals under various hyperring-theoretic constructions.

Let $\psi$ from $\mathscr{A}_1$ to $\mathscr{A}_2$ be a mapping  where $(\mathscr{A}_1, f_1, g_1)$ and $(\mathscr{A}_2, f_2, g_2)$ are two commutative Krasner $(m, n)$-hyperrings. Recall from \cite{d1} that $\psi$ is  a homomorphism if for any $u^m _1, v^n_ 1 \in \mathscr{A}_1$ we have
\begin{itemize}
\item[\rm{(i)}]~$\psi(f_1(u_1^m)) = f_2(\psi(u_1),\ldots, \psi(u_m)),$
\item[\rm{(ii)}]~$\psi(g_1(v_1^n)) = g_2(\psi(v_1),\ldots,\psi(v_n)),$
\item[\rm{(iii)}]~$\psi(1_{\mathscr{A}_1})=1_{\mathscr{A}_2}.$
\end{itemize}
\begin{theorem} \label{homo}
Let $\psi$ from $\mathscr{A}_1$ to $\mathscr{A}_2$ be a homomorphism where $(\mathscr{A}_1, f_1, g_1)$ and $(\mathscr{A}_2, f_2, g_2)$ are  commutative Krasner $(m,n)$-hyperrings and let $S$ be an MS of  $\mathscr{A}_1$. Then, the following assertions  hold. 
\begin{itemize}
\item[\rm{(ii)}]~ The inverse image of every $n$-ary $\psi(S)$-hyperideal of $\mathscr{A}_2$ is an $n$-ary $S$-hyperideal of $\mathscr{A}_1$.
\item[\rm{(ii)}]~ If $\psi$ is an epimorphism, then the image of every $n$-ary $S$-hyperideal of $\mathscr{A}_1$ containing $ker (\psi)$ is an $n$-ary $\psi(S)$-hyperideal of $\mathscr{A}_2$.
\end{itemize} 
\end{theorem}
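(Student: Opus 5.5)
For the first assertion, I take an $n$-ary $\psi(S)$-hyperideal $P_2$ of $\mathscr{A}_2$ and set $P_1=\psi^{-1}(P_2)$.

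First I check that $P_1$ is a proper hyperideal of $\mathscr{A}_1$. The hyperideal axioms transfer through the homomorphism conditions (i) and (ii), exactly as in the standard result from \cite{d1}. Properness follows from (iii): if $1_{\mathscr{A}_1}\in P_1$, then $1_{\mathscr{A}_2}=\psi(1_{\mathscr{A}_1})\in P_2$, which contradicts the properness of $P_2$.

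Next I verify the defining condition. Suppose $g_1(p_1^n)\in P_1$ with $p_i\in S$. Then
$$g_2(\psi(p_1),\ldots,\psi(p_n))=\psi(g_1(p_1^n))\in P_2,$$
and $\psi(p_i)\in\psi(S)$. I note that $\psi(S)$ is indeed an MS of $\mathscr{A}_2$, since $g_2(\psi(s_1),\ldots,\psi(s_n))=\psi(g_1(s_1^n))\in\psi(S)$. Because $P_2$ is an $n$-ary $\psi(S)$-hyperideal, this gives
$$g_2(\psi(p_1),\ldots,\psi(p_{i-1}),1_{\mathscr{A}_2},\psi(p_{i+1}),\ldots,\psi(p_n))\in P_2.$$
Using $\psi(1_{\mathscr{A}_1})=1_{\mathscr{A}_2}$, this element equals $\psi(g_1(p_1^{i-1},1_{\mathscr{A}_1},p_{i+1}^n))$. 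Hence $g_1(p_1^{i-1},1_{\mathscr{A}_1},p_{i+1}^n)\in P_1$, as required.

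For the second assertion, let $\psi$ be onto and let $P$ be an $n$-ary $S$-hyperideal of $\mathscr{A}_1$ with $\ker(\psi)\subseteq P$. The key preliminary fact is that $\psi^{-1}(\psi(P))=P$.

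To prove this, take $x$ with $\psi(x)=\psi(p)$ for some $p\in P$. Then
$$0\in f_2(\psi(x),-\psi(p),0^{(m-2)})=\psi\big(f_1(x,-p,0^{(m-2)})\big),$$
so some $k\in f_1(x,-p,0^{(m-2)})$ lies in $\ker(\psi)\subseteq P$. By reversibility in the canonical hypergroup, $x\in f_1(k,p,0^{(m-2)})\subseteq P$.

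With this fact, $\psi(P)$ is a hyperideal of $\mathscr{A}_2$, since surjectivity lets every element of $\mathscr{A}_2$ be lifted. It is also proper: if $1_{\mathscr{A}_2}\in\psi(P)$, then $1_{\mathscr{A}_1}\in\psi^{-1}(\psi(P))=P$, a contradiction.

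For the defining condition, let $g_2(q_1^n)\in\psi(P)$ with $q_i\in\psi(S)$. I choose lifts $q_j=\psi(p_j)$ with $p_i\in S$. Then $\psi(g_1(p_1^n))\in\psi(P)$, so $g_1(p_1^n)\in P$ by the preliminary fact. The $S$-hyperideal property of $P$ gives $g_1(p_1^{i-1},1_{\mathscr{A}_1},p_{i+1}^n)\in P$. Applying $\psi$ yields $g_2(q_1^{i-1},1_{\mathscr{A}_2},q_{i+1}^n)\in\psi(P)$.

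The main obstacle is the equality $\psi^{-1}(\psi(P))=P$. It is the one step that uses the hypothesis $\ker(\psi)\subseteq P$, and it requires handling the $m$-ary hyperaddition through reversibility rather than through ordinary subtraction. Everything else is a direct transport of the definition through $\psi$.
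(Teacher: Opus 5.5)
Your proposal is correct and follows the paper's proof essentially step for step. In (i) you push $g_1(p_1^n)$ forward, apply the $\psi(S)$-hyperideal property, and pull back using $\psi(1_{\mathscr{A}_1})=1_{\mathscr{A}_2}$; in (ii) you lift via surjectivity, use $\ker(\psi)\subseteq P$ to get $g_1(p_1^n)\in P$, and map down. The only difference is that you supply details the paper leaves implicit: properness of $\psi^{-1}(P_2)$ and $\psi(P)$, the fact that $\psi(S)$ is an MS, and the reversibility argument for $\psi^{-1}(\psi(P))=P$, which the paper compresses into the phrase ``since $P$ contains $\ker(\psi)$''.
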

\begin{proof}
(i) Let $Q$ be an $n$-ary $\psi(S)$-hyperideal of $\mathscr{A}_2$. Assume that  $g_1(p_1^n) \in \psi^{-1}(Q)$ for $p_1^n \in \mathscr{A}_1$ and $p_i \in S$ for some $1 \leq i \leq n$. This implies that $g_2 \big(\psi(p_1),\ldots,\psi(p_n) \big)=\psi(g_1(p_1^n)) \in Q$. Since $Q$ is an $n$-ary $\psi(S)$-hyperideal of $\mathscr{A}_2$ and $\psi(p_i) \in \psi(S)$, we obtain $\psi(g_1(p_1^{i-1},1_{\mathscr{A}_1},p_{i+1}^n)=g_2 \big(\psi(p_1),\ldots,\psi(p_{i-1}),1_{\mathscr{A}_2},\psi(p_{i+1}),\ldots,\psi(p_n)\big) \in Q$ which means $g_1(p_1^{i-1},1_{\mathscr{A}_1},p_{i+1}^n) \in \psi^{-1}(Q)$. Consequently, $\psi^{-1}(Q)$ is an $n$-ary $S$-hyperideal of $\mathscr{A}_1$.

(ii) Let $P$ be an $n$-ary $S$-hyperideal of $\mathscr{A}_1$ containing $ker (\psi)$ where $\psi$ is an epimorphism.  Suppose that $g_2(q_1^n) \in \psi(P)$ for $q_1^n \in \mathscr{A}_2$ and $q_i \in \psi(S)$ for some $1 \leq i \leq n$. By the hypothesis, we have $\psi(p_i) =q_i$ for some $ p_i \in S$ and there exists $p_t \in \mathscr{A}_1$ for every $1 \leq t \leq i$ and $i \leq t \leq n$ such that $\psi(p_t)=q_t$. Therefore,  we get $\psi(g_1(p_1^n))=g_2(\psi(p_1),\ldots,\psi(p_n))=g_2(q_1^n) \in \psi(P)$. Since $P$ contains $ker (\psi)$, we conclude that $g_1(p_1^n) \in P$. This implies that $g_1(p_1^{i-1},1_{\mathscr{A}_1},p_{i+1}^n) \in P$ as $P$ is  an $n$-ary $S$-hyperideal of $\mathscr{A}_1$ and $p_i \in S$. Then,  $g_2 (q_1^{i-1},1_{\mathscr{A}_2},q_{i+1}^n)=g_2 \big(\psi(p_1),\ldots,\psi(p_{i-1}),1_{\mathscr{A}_2},\psi(p_{i+1}),\ldots,\psi(p_n)\big)=\psi(g_1(p_1^{i-1},1_{\mathscr{A}_1},p_{i+1}^n)) \in \psi(P)$. Thus, $\psi(P)$ is an $n$-ary $\psi(S)$-hyperideal of $\mathscr{A}_2$.
\end{proof}
\begin{theorem}
Let $P \subseteq Q$ be hyperideals of $\mathscr{A}$ and $S$ be an MS of  $\mathscr{A}$. Then, $Q$ is an $n$-ary $S$-hyperideal of $\mathscr{A}$ if and only if $Q/P$ is an $n$-ary $T$-hyperideal of $\mathscr{A}/P$ where $T=\{f(r,P,0^{(m-2)}) \ \vert \ r \in S \}$.
\end{theorem}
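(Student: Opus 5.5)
We argue directly from the definition, using the standard description of the quotient Krasner $(m,n)$-hyperring $\mathscr{A}/P$. Its elements are the cosets $f(r,P,0^{(m-2)})$, with hyperoperation $F$ and operation $G$ given by
\[
G\big(f(r_1,P,0^{(m-2)}),\ldots,f(r_n,P,0^{(m-2)})\big)=f(g(r_1^n),P,0^{(m-2)}).
\]
The identity is $f(1_\mathscr{A},P,0^{(m-2)})$, and $Q/P=\{f(q,P,0^{(m-2)}) \ \vert \ q\in Q\}$. We will use the fact from \cite{d1} that, since $P\subseteq Q$, we have $f(r,P,0^{(m-2)})\in Q/P$ if and only if $r\in Q$. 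This holds because $f(r,P,0^{(m-2)})=f(q,P,0^{(m-2)})$ gives $r\in f(q,P,0^{(m-2)})\subseteq Q$. Note also that $Q$ is proper if and only if $Q/P$ is proper, and that $T$ is an MS of $\mathscr{A}/P$ because $G$ applied to cosets of elements of $S$ is the coset of $g(s_1^n)\in S$.

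($\Longrightarrow$) Assume $Q$ is an $n$-ary $S$-hyperideal. Take cosets $x_j=f(p_j,P,0^{(m-2)})$ with $G(x_1^n)\in Q/P$ and $x_i\in T$. Since $x_i\in T$, we can write $x_i=f(s,P,0^{(m-2)})$ with $s\in S$. Replacing the representative $p_i$ by $s$ (which does not change $x_i$), we get $f(g(p_1^{i-1},s,p_{i+1}^n),P,0^{(m-2)})\in Q/P$. By the coset-membership fact, $g(p_1^{i-1},s,p_{i+1}^n)\in Q$. Because $Q$ is an $n$-ary $S$-hyperideal, $g(p_1^{i-1},1_\mathscr{A},p_{i+1}^n)\in Q$. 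Passing back to cosets, $G(x_1^{i-1},1_{\mathscr{A}/P},x_{i+1}^n)\in Q/P$.

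($\Longleftarrow$) Assume $Q/P$ is an $n$-ary $T$-hyperideal. Let $g(p_1^n)\in Q$ with $p_i\in S$. Then $G$ applied to the cosets of the $p_j$ equals the coset of $g(p_1^n)$, so it lies in $Q/P$. The coset of $p_i$ lies in $T$, so by hypothesis the coset of $g(p_1^{i-1},1_\mathscr{A},p_{i+1}^n)$ lies in $Q/P$. By the coset-membership fact, $g(p_1^{i-1},1_\mathscr{A},p_{i+1}^n)\in Q$.

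The main obstacle is the bookkeeping in the forward direction. The definition of a $T$-hyperideal only says that $x_i$ is the coset of some $s\in S$, not that the given representative $p_i$ lies in $S$, so we must carefully justify replacing $p_i$ by $s$. The other point to justify carefully is the coset-membership fact, which is where the hypothesis $P\subseteq Q$ is used. The remaining steps are routine.
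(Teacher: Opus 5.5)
Your proof is correct and is essentially the paper's argument: the paper simply applies Theorem~\ref{homo} to the canonical epimorphism $\pi\colon x\mapsto f(x,P,0^{(m-2)})$, for which $\pi(S)=T$, $\pi(Q)=Q/P$ and $\pi^{-1}(Q/P)=Q$. Your two directions are exactly parts (ii) and (i) of that theorem written out for $\pi$, with your coset-membership fact playing the role of the hypothesis $\ker\pi=P\subseteq Q$; you also check properness and that $T$ is an MS, which the paper leaves implicit.
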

\begin{proof}
Consider the map $\pi :\mathscr{A} \longrightarrow \mathscr{A}/P$, defined by $x \longrightarrow f(x,P,0^{(m-2)})$. This map is an epimorphism by Theorem 3.2 in \cite{sorc1}. By
using Theorem \ref{homo}, the claim can be proved.
\end{proof}
\begin{theorem}
Let $S,S^{\prime}$ be two MS$^,s$ of  $\mathscr{A}$ and $P$ be a proper hyperideal of $\mathscr{A}$ disjoint from $S^{\prime}$ such that $1_\mathscr{A} \in S^{\prime}$.  
\begin{itemize} 
\item[\rm{{\bf (1)}}]~ 
If $P$ is an $n$-ary $S$-hyperideal of $\mathscr{A}$, then ${S^{\prime}}^{-1} P$ is an $n$-ary ${S^{\prime}}^{-1} S$-hyperideal of ${S^{\prime}}^{-1} \mathscr{A}$ where ${S^{\prime}}^{-1} S=\{\frac{s}{r} \ \vert s \in S, r \in S^{\prime}\}$.
\item[\rm{{\bf (2)}}]~ every  proper hyperideal of $S^{-1} \mathscr{A}$ is an $n$-ary $S^{-1} S$-hyperideal.
\end{itemize}
\end{theorem}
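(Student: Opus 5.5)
For (1) I will work directly with the description of the hyperring of fractions from \cite{mah5}. Its operation is
\[
G\Big(\tfrac{a_1}{r_1},\ldots,\tfrac{a_n}{r_n}\Big)=\tfrac{g(a_1^n)}{g(r_1^n)},
\]
its identity is $\tfrac{1_\mathscr{A}}{1_\mathscr{A}}$, and we use $\tfrac{x}{r}\in {S^{\prime}}^{-1}P$ iff $g(u,x,1_\mathscr{A}^{(n-2)})\in P$ for some $u\in S^{\prime}$.

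First, ${S^{\prime}}^{-1}S$ is an MS, because $G(\tfrac{s_1}{r_1},\ldots,\tfrac{s_n}{r_n})=\tfrac{g(s_1^n)}{g(r_1^n)}$. Second, ${S^{\prime}}^{-1}P$ is a proper hyperideal. If it were the whole ring, we would have $g(u,1_\mathscr{A},1_\mathscr{A}^{(n-2)})=u\in P$ for some $u\in S^{\prime}$, contradicting $P\cap S^{\prime}=\varnothing$.

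For the main step, suppose
\[
G\Big(\tfrac{a_1}{r_1},\ldots,\tfrac{a_n}{r_n}\Big)\in {S^{\prime}}^{-1}P \quad\text{with}\quad \tfrac{a_i}{r_i}\in {S^{\prime}}^{-1}S.
\]
After re-choosing the representative, I may take $a_i\in S$. Then there is $u\in S^{\prime}$ with $g(u,g(a_1^n),1_\mathscr{A}^{(n-2)})\in P$. By associativity and commutativity this element equals
\[
g\big(a_i,\,g(a_1^{i-1},u,a_{i+1}^n),\,1_\mathscr{A}^{(n-2)}\big).
\]
Since $P$ is an $n$-ary $S$-hyperideal and $a_i\in S$, we get $g(a_1^{i-1},u,a_{i+1}^n)=g\big(u,g(a_1^{i-1},1_\mathscr{A},a_{i+1}^n),1_\mathscr{A}^{(n-2)}\big)\in P$.

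It follows that
\[
G\Big(\tfrac{a_1}{r_1},\ldots,\tfrac{1_\mathscr{A}}{1_\mathscr{A}},\ldots,\tfrac{a_n}{r_n}\Big)=\tfrac{g(a_1^{i-1},1_\mathscr{A},a_{i+1}^n)}{g(r_1^{i-1},1_\mathscr{A},r_{i+1}^n)}
\]
lies in ${S^{\prime}}^{-1}P$, which is what the definition requires. The delicate point is the representative issue: an element $\tfrac{a}{r}$ may equal $\tfrac{s}{r'}$ with $s\in S$ while $a\notin S$. I will handle this by working with the representative $\tfrac{s}{r'}$ throughout, because the product and the membership test do not depend on the representative.

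For (2), I will apply Proposition \ref{8} to the hyperring $S^{-1}\mathscr{A}$ with the MS $S^{-1}S$, which contains $\tfrac{1_\mathscr{A}}{1_\mathscr{A}}$ whenever $1_\mathscr{A}\in S$; I will adjoin it if necessary, which is harmless. It then suffices to show $S^{-1}S\subseteq \mathbf{U}(S^{-1}\mathscr{A})$. For $s,r\in S$ we have
\[
G\Big(\tfrac{s}{r},\tfrac{r}{s},\tfrac{1_\mathscr{A}}{1_\mathscr{A}}^{(n-2)}\Big)=\tfrac{g(s,r,1_\mathscr{A}^{(n-2)})}{g(r,s,1_\mathscr{A}^{(n-2)})}=\tfrac{1_\mathscr{A}}{1_\mathscr{A}},
\]
so each $\tfrac{s}{r}$ is invertible. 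Proposition \ref{8} then gives that every proper hyperideal of $S^{-1}\mathscr{A}$ is an $n$-ary $S^{-1}S$-hyperideal.

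The main obstacle is the representative bookkeeping in (1). I will also need the standard fact from \cite{mah5} that membership in ${S^{\prime}}^{-1}P$ is characterized by $g(u,x,1_\mathscr{A}^{(n-2)})\in P$ for some $u\in S^{\prime}$.
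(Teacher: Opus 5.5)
Your proposal is correct and follows the paper's proof. In (1) you clear denominators with some $u\in S^{\prime}$, regroup so the $S$-entry stands alone, and cancel it using the $S$-hyperideal property of $P$. In (2) you show $S^{-1}S\subseteq \mathbf{U}(S^{-1}\mathscr{A})$ and invoke Proposition~\ref{8}, exactly as the paper does.

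You are more careful than the paper on two points it passes over silently: choosing a representative whose numerator lies in $S$, and checking that ${S^{\prime}}^{-1}P$ is proper. In (2), the adjoining you mention is unnecessary, since $\tfrac{s}{s}\in S^{-1}S$ is already the identity of $S^{-1}\mathscr{A}$.
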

\begin{proof}
(1) Let $P$ be an $n$-ary $S$-hyperideal of $\mathscr{A}$ disjoint from  $S^{\prime}$. Clearly,  ${S^{\prime}}^{-1} P \cap {S^{\prime}}^{-1} S=\varnothing$. Assume that $G(\frac{p_1}{r_1},\ldots,\frac{p_n}{r_n}) \in {S^{\prime}}^{-1} P$ for $\frac{p_1}{r_1},\ldots,\frac{p_n}{r_n} \in {S^{\prime}}^{-1} \mathscr{A}$ and $\frac{p_i}{r_i} \in {S^{\prime}}^{-1} S$ for some $1 \leq i \leq n$. From $\frac{g(p_1^n)}{g(r_1^n)} \in {S^{\prime}}^{-1} P$, it follows that $,g(r,g(p_1^n),1_\mathscr{A}^{(n-2)}) \in P$ for some $r \in S^{\prime}$. Since $g \big(g(p_1^{i-1},r,p_{i+1}^n) ,p_i,1_\mathscr{A}^{(n-2)}\big) \in P$, $p_i \in S$ and $P$ is an $n$-ary $S$-hyperideal of $\mathscr{A}$,  we get $g(p_1^{i-1},r,p_{i+1}^n) \in P$ and so 
$G(\frac{p_1}{r_1},\ldots,\frac{p_{i-1}}{r_{i-1}}, \frac{1_\mathscr{A}}{1_\mathscr{A}}, \frac{p_{i+1}}{r_{i+1}},\frac{p_n}{r_n})=\frac{g(p_1^{i-1},1_\mathscr{A},p_{i+1}^n)}{g(r_1^{i-1},1_\mathscr{A},r_{i+1}^n)}=\frac{g(p_1^{i-1},r,p_{i+1}^n) }{g(r_1^{i-1},r,r_{i+1}^n)} \in {S^{\prime}}^{-1} P$. Hence, ${S^{\prime}}^{-1} P$ is an $n$-ary ${S^{\prime}}^{-1} S$-hyperideal of ${S^{\prime}}^{-1} \mathscr{A}$.

(2) Take $S=S^{\prime}$. In this case, $S^{-1}S \subseteq {\bf U}(S^{-1}\mathscr{A})$. Therefore, every  proper hyperideal of $S^{-1} \mathscr{A}$ is an $n$-ary $S^{-1} S$-hyperideal by Proposition \ref{8}.
\end{proof}

Suppose that  $(\mathscr{A}_1, f_1, g_1)$ and $(\mathscr{A}_2, f_2, g_2)$ be  commutative Krasner $(m,n)$-hyperrings. Then, $(\mathscr{A}_1 \times \mathscr{A}_2, f _1 \times f_2 ,g_1 \times g_2 )$ is a Krasner $(m, n)$-hyperring such that  $m$-ary hyperoperation $f _1 \times f_2 $ and $n$-ary operation $g_1 \times g_2$ are defined as follows:

$\hspace{1cm} f_1 \times f_2((r_{1}, t_{1}),\ldots,(r_m,t_m)) = \{(r,t) \ \vert \ \ t \in f_1(r_1^m), r \in f_2(t_1^m) \},$

$\hspace{1cm} g_1 \times g_2 ((p_1,q_1),\ldots,(p_n,q_n)) =(g_1(p_1^n),g_2(q_1^n)) $,\\
for $r_1^m,p_1^n \in \mathscr{A}_1$ and $t_1^m,q_1^n \in \mathscr{A}_2$ \cite{mah2}. Now, we present a characterization  of the concept of  $n$-ary $S$-hyperideals on cartesian product of Krasner $(m, n)$-hyperrings.

\begin{theorem} \label{cart2}
Suppose that $P_1^k$ are hyperideals of $\mathscr{A}_1^k$, respectively, such that $(\mathscr{A}_1, f_1, g_1), \ldots, (\mathscr{A}_k, f_k, g_k)$ are commutative Krasner $(m,n)$-hyperrings and $S_j$ is an  $n$-ary MS of $\mathscr{A}_j$ for every $1 \leq j \leq k$. Then, $P_1 \times \cdots \times P_k$ is an $n$-ary $(S_1 \times \cdots \times S_k)$-hyperideal of $\mathscr{A}_1 \times \cdots \times \mathscr{A}_k$ if and only if $P_j$ is an $n$-ary $S_j$-hyperideal of $\mathscr{A}_j$ for every $1 \leq j \leq k$.
\end{theorem}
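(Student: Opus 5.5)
We argue componentwise, using that $g_1\times\cdots\times g_k$ acts coordinatewise and that the scalar identity of the product is $(1_{\mathscr{A}_1},\ldots,1_{\mathscr{A}_k})$. We also use that $S_1\times\cdots\times S_k$ is an $n$-ary MS of the product: its elements are tuples whose $j$-th coordinates lie in $S_j$, and $g_j$ preserves $S_j$.

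For the direction ($\Leftarrow$), first note that $P_1\times\cdots\times P_k$ is proper because each $P_j$ is proper. Suppose $(g_1\times\cdots\times g_k)(x_1^n)\in P_1\times\cdots\times P_k$, where $x_l=(p_{l,1},\ldots,p_{l,k})$ and $x_i\in S_1\times\cdots\times S_k$ for some $i$. Reading this coordinatewise gives $g_j(p_{1,j},\ldots,p_{n,j})\in P_j$ with $p_{i,j}\in S_j$ for every $j$. Since each $P_j$ is an $n$-ary $S_j$-hyperideal, replacing the $i$-th entry by $1_{\mathscr{A}_j}$ keeps the product in $P_j$. Reassembling the coordinates, the product with $x_i$ replaced by the identity of the product lies in $P_1\times\cdots\times P_k$.

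For the direction ($\Rightarrow$), fix $j$ and suppose $g_j(p_1^n)\in P_j$ with $p_i\in S_j$.
\begin{itemize}
\item[(a)] Build tuples $x_l$ that have $p_l$ in coordinate $j$ and, in every other coordinate $t\neq j$, an element whose product lands in $P_t$.
\item[(b)] Arrange also that the $i$-th tuple lies in $S_1\times\cdots\times S_k$. To do this, choose $s_t\in S_t$ (each $S_t$ is nonempty) and put $s_t$ in coordinate $t$ of $x_i$. For $l\neq i$, put $0$ in the coordinates $t\neq j$ of $x_l$; then the $t$-th coordinate of the product is $0\in P_t$ by axiom (iv), at least when $n\ge 2$.
\item[(c)] Apply the hypothesis on the product. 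The product of the $x_l$ lies in $P_1\times\cdots\times P_k$, and $x_i\in S_1\times\cdots\times S_k$. Hence replacing $x_i$ by the identity keeps the product in $P_1\times\cdots\times P_k$.
\item[(d)] Project to coordinate $j$ to get $g_j(p_1^{i-1},1_{\mathscr{A}_j},p_{i+1}^n)\in P_j$.
\end{itemize}

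It remains to show $P_j$ is proper. If $P_j=\mathscr{A}_j$, then $P_j\cap S_j\neq\varnothing$. Taking an element of $S_1\times\cdots\times S_k$ whose coordinates lie in $P_t$ where possible would contradict Theorem \ref{1}(1), but the other coordinates need not lie in $P_t$, so this route does not close the argument. I would argue more directly. Take $s_t\in S_t$ for all $t$ and consider $g(s,0^{(n-2)},z)$, with $z$ having $1$ in coordinate $j$ and $0$ elsewhere. This product has coordinate $j$ equal to $0\in P_j=\mathscr{A}_j$ and the other coordinates equal to $0$. The $S$-property then forces $g(1,0^{(n-2)},z)$ to lie in the product, which is trivial. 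So that approach also fails.

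The real obstacle is therefore properness. With the paper's convention that $S$-hyperideals are proper, I would read the statement as requiring each $P_j$ to be proper, equivalently interpreting the theorem under the assumption that the $P_j$ are proper hyperideals. I will state that assumption explicitly. Under it, steps (a)–(d) carry the argument, and the only delicate point is the choice of padding in step (b): zeros in the positions $l\neq i$ force the off-coordinates of the product to vanish.
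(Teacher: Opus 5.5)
Your proof is correct, and it follows the same outline as the paper's. The reverse direction is the same coordinatewise check. In the forward direction, both you and the paper pad the tuples $l\neq i$ with zeros off coordinate $j$, apply the product hypothesis, and project to coordinate $j$.

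The one substantive difference is the $i$-th tuple. The paper uses $(1_{\mathscr{A}_1},\ldots,1_{\mathscr{A}_{j-1}},p_i,1_{\mathscr{A}_{j+1}},\ldots,1_{\mathscr{A}_k})$ and asserts it lies in $S_1\times\cdots\times S_k$. That silently requires $1_{\mathscr{A}_t}\in S_t$ for every $t\neq j$, which is not a hypothesis of the theorem. Your choice of arbitrary $s_t\in S_t$ makes the membership automatic. It costs nothing, because the zeros in the other tuples (with $n\ge 2$) already force those coordinates of the product to be $0\in P_t$. So your version repairs a step the paper leaves unjustified.

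Your concern about properness is well founded, and the paper's proof ignores it. You should settle it with a counterexample rather than report two failed attempts, because the forward direction is false as literally stated. Take any proper $n$-ary $S_2$-hyperideal $P_2$ of $\mathscr{A}_2$. Then $\mathscr{A}_1\times P_2$ is a proper $n$-ary $(S_1\times S_2)$-hyperideal, since the first coordinate imposes no condition. But $\mathscr{A}_1$ is not an $n$-ary $S_1$-hyperideal, since it is not proper. So your added assumption that every $P_j$ is proper is necessary, not merely convenient. Alternatively, you could weaken the conclusion to say only that each $P_j$ satisfies the $S_j$-condition.
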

\begin{proof}
$\Longrightarrow$ Let $P_1 \times \cdots \times P_k$ be an $n$-ary $(S_1 \times \cdots \times S_k)$-hyperideal of $\mathscr{A}_1 \times \cdots \times \mathscr{A}_k$. Take any $1 \leq j \leq k$. Assume that $g_j(p_1^n) \in P_j$ for $p_1^n \in \mathscr{A}_j$ and $p_i \in S_j$ for some $1 \leq i \leq n$. Put  $q_i=(1_{\mathscr{A}_1},\ldots,1_{\mathscr{A}_{j-1}},p_i,1_{\mathscr{A}_{j+1}},\ldots,1_{\mathscr{A}_k})$ and $q_l=(0_{\mathscr{A}_1},\ldots,0_{\mathscr{A}_{j-1}},p_l,0_{\mathscr{A}_{j+1}},\ldots,0_{\mathscr{A}_k})$ for all $1 \leq l \leq i$ and $i \leq l \leq n$. This follows that $g_1 \times \ldots \times  g_k (q_1^n) \in P_1 \times \cdots \times P_k$. This implies that $g_1 \times \ldots \times  g_k (q_1^{i-1},1_{\mathscr{A}_1 \times \cdots \times \mathscr{A}_k},q_{i+1}^n) \in P_1 \times \cdots \times P_k$ as $P_1 \times \cdots \times P_k$ is an $n$-ary $(S_1 \times \cdots \times S_k)$-hyperideal of $\mathscr{A}_1 \times \cdots \times \mathscr{A}_k$ and $q_i \in S_1 \times \cdots \times S_k$. Therefore,  we obtain $(0_{\mathscr{A}_1},\ldots,0_{\mathscr{A}_{j-1}},g_j(p_1^{i-1},1_{\mathscr{A}_j},p_{i+1}^n),0_{\mathscr{A}_{j+1}},\ldots,0_{\mathscr{A}_k}) \in P_1 \times \cdots \times P_k$ and so $g_j(p_1^{i-1},1_{\mathscr{A}_j},p_{i+1}^n) \in P_j$. Thus, $P_j$ is an $n$-ary $S_j$-hyperideal of $\mathscr{A}_j$ for every $1 \leq j \leq k$.

$\Longleftarrow$ Assume that $g_1 \times \cdots \times g_k \big((p_{11},\ldots,p_{1k}),\ldots,(p_{n1},\ldots,p_{nk} )\big) \in P_1 \times \cdots \times P_k$ for $(p_{11},\ldots,p_{1k}),\ldots,(p_{n1},\ldots,p_{nk} ) \in \mathscr{A}_1 \times \cdots \times \mathscr{A}_k$ and $(p_{i1},\ldots,p_{ik} ) \in S_1 \times \cdots \times S_k$ for some $1 \leq i \leq n$. Therefore,  $\big(g_1(p_{11}^{n1}),\ldots,g_k(p_{1k}^{nk})\big) \in P_1 \times \cdots \times P_k$ which means $g_j(p_{1j}^{nj}) \in P_j$ for every $1 \leq j \leq k$. Hence, we obtain $g_j(p_{1j}^{{(i-1)}j},1_{\mathscr{A}_j}, p_{(i+1)j}^{nj}) \in P_j$ as $P_j$ is an $n$-ary $S_j$-hyperideal of $\mathscr{A}_j$ for every $1 \leq j \leq k$ and $p_{ij} \in S_j$. Then, we conclude that $\big( g_1(p_{11}^{{(i-1)}1},1_{\mathscr{A}_1}, p_{(i+1)1}^{n1}),\ldots,g_k(p_{1k}^{{(i-1)}k},1_{\mathscr{A}_k}, p_{(i+1)k}^{nk})   \big) \in P_1 \times \cdots \times P_k$ which implies $g_1 \times \cdots \times g_k \big((p_{11},\ldots,p_{1k}),\ldots,(p_{{(i-1)}1},\ldots,p_{(i-1)k}),(1_{\mathscr{A}_1},\ldots,1_{\mathscr{A}_k}),\break(p_{{(i+1)}1},\ldots,p_{(i+1)k}),\ldots,(p_{n1},\ldots,p_{nk} )\big) \in P_1 \times \cdots \times P_k$ Consequently, $P_1 \times \cdots \times P_k$ is an $n$-ary $(S_1 \times \cdots \times S_k)$-hyperideal of $\mathscr{A}_1 \times \cdots \times \mathscr{A}_k$.
\end{proof}

\section{conclusion}
The main objective of this paper was to introduce  and investigate  the concept of $n$-ary $S$-hyperideals, shedding new light on the classification of hyperideals  by means of  an $n$-ary multiplicative subset $S$ of a Krasner $(m,n)$-hyperring $\mathscr{A}$.
We comprehensively examined the properties of $n$-ary $S$-hyperideals and analyzed their connections with other classes of hyperideals. Specifically, we showed that every maximal $n$-ary $S$-hyperideal of $\mathscr{A}$ is $n$-ary prime. Furthermore, we demonstrated that the existence of an $n$-ary multiplicative subset $S$ of $A$ is guaranteed for every hyperideal $P$ of $A$ such that $P$ is an $n$-ary $S$-hyperideal. Moreover, the smallest $n$-ary $S$-hyperideal of $\mathscr{A}$ containing an arbitrary hyperideal was presented. While $S$-hyperideals and primary hyperideals represent different concepts, we proved that  $n$-ary $S$-hyperideals and $n$-ary $Q$-primary hyperideals are equivalent when $S$ is chosen as the complement of the minimal prime hyperideal $Q$. We presented  a modification of the Prime Avoidance Lemma in terms of $S$-hyperideals. Finally, we  investigated how $S$-hyperideals behave under various hyperring-theoretic constructions.
\section{Future work}
\begin{definition}
Let $S$ be an MS of $\mathscr{A}$. A proper hyperideal $P$ of $\mathscr{A}$ is called an $n$-ary  $S_r$-hyperideal if  $g(p_1^n) \in P$ for   $p_1^n \in  \mathscr{A}$ and $p_i \in S$ for some $1 \leq i \leq n$ imply that $g(p_1^{i-1},1_\mathscr{A},p_{i+1}^n) \in {\bf r}(P)$. 
\end{definition}

\end{document}